\journalname{Numerische Mathematik}
\begin{document}

\title{On the approximation of electronic wavefunctions 
 by anisotropic Gauss and Gauss-Hermite functions}

\author{Stephan Scholz \and Harry Yserentant}

\institute{
 Institut f\"ur Mathematik, Technische Universit\"at Berlin,
 10623 Berlin, Germany\\
\email{stscholz@math.tu-berlin.de, yserentant@math.tu-berlin.de}}

\date{November 27, 2016}

\titlerunning{On the approximation of electronic wavefunctions}
 
\authorrunning{S. Scholz, H. Yserentant}

\maketitle


\begin{abstract}

The electronic Schr\"odinger equation describes the motion of $N$ 
electrons under Coulomb interaction forces in a field of clamped 
nuclei. The solutions of this equation, the electronic wavefunctions,
depend on $3N$ variables, three spatial dimensions for each electron. 
We study the approximability of these wavefunctions by linear 
combinations of anisotropic Gauss functions, or more precisely
Gauss-Hermite functions, products of polynomials and anisotropic 
Gauss functions in the narrow sense. We show that the original, 
singular wavefunctions can up to given accuracy and a negligibly 
small residual error be approximated with only insignificantly 
more such terms than their convolution with a Gaussian kernel of 
sufficiently small width and that basically arbitrary orders of 
convergence can be reached.

%
%
%
%
%

\subclass{35J10 \and 41A25 \and 41A63} 
              
%
%
%
%
%
%

\end{abstract}


\renewcommand   {\thesection}{\arabic{section}}
\renewcommand   {\theequation}{\arabic{section}.\arabic{equation}}

\renewcommand   {\thelemma}{\arabic{section}.\arabic{lemma}}
\renewcommand   {\thetheorem}{\arabic{section}.\arabic{theorem}}

\spnewtheorem*  {unnumberedtheorem}{Theorem}{\bf }{\it}
\spnewtheorem*  {unnumberedlemma}{Lemma}{\bf }{\it}


\newcommand     {\rmref}[1]   {{\rm (\ref{#1})}}
\newcommand     {\rmthmref}[1]{{\rm  \ref{#1}}}
  
\newcommand     {\fourier}[1] {\widehat{#1}}
		 
\newcommand     {\vektor}[1]  {#1}
\newcommand     {\diff}[1]    {\mathrm{d}#1}

\def \x         {\vektor{x}}
\def \a         {\vektor{a}}

\def \k         {\vektor{k}}

\def \Q         {\vektor{Q}}
\def \I         {\vektor{I}}

\def \valpha    {\vektor{\alpha}}
\def \veta      {\vektor{\eta}}
\def \vomega    {\vektor{\omega}}

\def \D         {\mathcal{D}}
\def \S         {\mathcal{S}}

\def \e         {\mathrm{e}}
\def \i         {\mathrm{i}}

\def \dx        {\,\diff{\x}}
\def \domega    {\,\diff{\vomega}}
\def \dt        {\,\diff{t}}

\def \GS        {\widetilde{G}}
\def \VS        {\widetilde{V}}
\def \TS        {\widetilde{T}}

\def \uS        {\widetilde{u}}

\def \sumi      {\sum_{i=1}^N}
\def \sumij     {\sum_{\underset{\scriptstyle i\neq j}{i,j=1}}^N}

\def \Hs        {\mathcal{H}}
\def \E         {\mathcal{E}}

\def \wS        {\widetilde{w}}


\section{Introduction}
\label{sec1}

\setcounter{equation}{0}
\setcounter{lemma}{0}
\setcounter{theorem}{0}

The approximation of high-dimensional functions, whether they 
be given explicitly or implicitly as solutions of differential 
equations, represents one of the grand challenges of applied 
mathematics. This field made great progress during the past 
years, above all by the emergence of modern tensor product 
methods \cite{Hackbusch}. The astonishing efficiency of such
methods for the numerical solution of certain partial
differential equations and their obviously often rapid 
convergence can meanwhile be explained theoretically 
\cite{Dahmen-DeVore-Grasedyck-Sueli}. Adaptive techniques
\cite{Bachmayr-Dahmen} have been developed that enable to 
exploit this convergence behavior in practical computations.
One of the most notorious and complicated problems of this 
type, however, the electronic Schr\"odinger equation, largely
resists such approaches. The Schr\"odinger equation forms the 
basis of quantum mechanics and is of fundamental importance 
for our understanding of atoms and molecules. It links 
chemistry to physics and describes a system of electrons and 
nuclei that interact by Coulomb attraction and repulsion forces. 
As proposed by Born and Oppenheimer in the nascency of quantum 
mechanics, the slower motion of the nuclei is mostly separated 
from that of the electrons. This results in the electronic 
Schr\"odinger equation, the problem to find the eigenvalues 
and eigenfunctions of the electronic Hamilton 
operator\footnote{To simplify the presentation, we omit 
the usual factor $1/2$ in front of the kinetic energy 
part, which corresponds only to a minor change of the 
lengthscale and does not affect the mathematics.}
\begin{equation}    \label{eq1.1}
H\;=\,{}-\,\sum_{i=1}^N\,\Delta_i \;-\;
\sumi\sum_{\nu=1}^K\frac{Z_\nu}{|\x_i-\a_\nu|}
\,+\,\frac12 \sumij\frac{1}{|\x_i-\x_j|}.
\vspace{-0.5\belowdisplayskip}
\end{equation}
It acts on functions with arguments $\x_1,\ldots,\x_N$ in 
$\mathbb{R}^3$, which are associated with the positions 
of the considered electrons. The $\a_1,\ldots,\a_K$ in 
$\mathbb{R}^3$ are the fixed positions of the nuclei and 
the values $Z_\nu>0$ the charges of the nuclei in multiples 
of the electron charge. The reason for the comparatively 
low performance of tensor product methods when applied to 
the electronic Schr\"odinger equation is that such methods 
fix a set of directions. It is not possible with tensor 
product methods to capture simultaneously and equally well 
the singularities arising from the interaction of the 
electrons and the nuclei aligned with the coordinate 
directions and from the electron-electron singularities 
aligned with the diagonals.

Therefore we come back to an old idea, the approximation 
of the eigenfunctions of the Schr\"odinger operator 
(\ref{eq1.1}), the electronic wavefunctions, by Gauss 
functions. The almost exclusive use of Gauss functions 
in quantum chemistry \cite{Helgaker} is partly motivated 
by the fact that the arising integrals can be easily 
evaluated but has also to do with their good approximation 
properties. We study the approximability of electronic 
wavefunctions by linear combinations of Gauss-Hermite 
functions
\begin{equation}    \label{eq1.2}    
P(\x)\exp\left(-\frac12\,(\x-\a)\cdot\Q(\x-\a)\right)
\end{equation}
composed of a polynomial part $P(\x)$ and an anisotropic
Gauss function. We will denote such functions in what 
follows shortly as Gauss functions. The symmetric positive 
definite matrices $Q$ are arbitrary and not fixed in 
advance. The same holds for the points $\a\in\mathbb{R}^{3N}$ 
around which the Gauss functions are centered and which 
are only indirectly determined by the positions of the 
nuclei. The ansatz in particular covers products of 
Gaussian orbitals and Gaussian geminals.
The key point is that the set of these functions 
is invariant to linear transformations and shifts of the 
coordinate system. There is therefore no need to distinguish 
between the singularities arising from the interaction of the 
electrons and the nuclei and among the electrons themselves. 
Basically we show that electronic wavefunctions can be 
approximated with arbitrary order in the number of the 
involved terms by linear combinations of such Gauss functions. 
This behavior is in contrast to approximation results that 
are based on the mixed regularity of the wavefunctions; 
see \cite{Ys_2004,Ys_2010,Ys_2011} and 
\cite{Kreusler-Yserentant}. These regularity properties 
together with the antisymmetry of the wavefunctions enforced 
by the Pauli principle suffice to construct approximations 
by linear combinations of Slater determinants composed of 
a fixed set of basis functions that converge with an order 
in the number of the involved terms that does not deteriorate 
with the number of electrons. The attainable convergence 
order is, however, fundamentally limited by the limited 
regularity of the wavefunctions. In the present work, we 
do not make direct use of regularity properties of the 
wavefunctions. The approximations are constructed 
directly by a kind of iterative procedure.

The key to our approximation of the wavefunctions is 
the extremely accurate approximation of the functions 
$1/\sqrt{r}$ and $1/r$ by exponential functions and with 
that indirectly also that of $1/r$ by Gauss functions. 
Approximations of this kind form a rather universal tool 
that received much attention during the past years. 
Braess \cite{Braess} and Kutzelnigg \cite{Kutzelnigg} 
studied such approximations in view of applications in 
quantum mechanics, particularly regarding the hydrogen 
ground state. Bachmayr, Chen, and Schneider 
\cite{Bachmayr-Chen-Schneider} extended this work to 
excited states. Braess and Hackbusch 
\cite{Braess-Hackbusch} have shown that the best uniform
approximation of $1/r$ on intervals $[R,\infty)$, $R>0$, 
by finite linear combinations of exponential functions
converges almost exponentially in the number of the  
involved terms. A detailed exposition of such 
approximations and results and a survey on some of their 
applications can be found in \cite{Braess-Hackbusch_2}. 

The central idea is to approximate the Coulomb potentials 
in the operator (\ref{eq1.1}) and the inverse of the 
correspondingly shifted Laplace operator, expressed in 
terms of the Fourier transform, with very high relative 
accuracy by series of Gauss functions. The original 
eigenvalue problem is at first rewritten as a linear 
equation with the convolution $f=K*u$ of the eigenfunction 
$u$ under consideration with a Gaussian kernel $K$ of 
sufficiently small width as right hand side. The solution 
of the corresponding approximate equation with same right 
hand side is then orders of magnitude closer to the 
solution $u$ of the original equation than $u$ to the 
true physical wavefunction influenced, among other things, 
by relativistic effects and spin-orbit coupling. The 
approximate equation is solved via a Neumann series. 
This series is, after expansion of the right hand side 
into a series of Gauss functions, itself a series of 
Gauss functions that is truncated in an appropriate manner. 
The main hurdle, to which most of this paper is devoted, 
is the control of this truncation process.

The main result can be roughly sketched as follows. 
Assume that there exists an infinite sequence 
$g_1,g_2,\ldots$ of Gauss functions such that 
for every $\varepsilon>0$
\begin{equation}    \label{eq1.3}
\Big\|\,K*u\,-\sum_{j=1}^n g_j\,\Big\|_1\leq\;\varepsilon, 
\quad
n\;\leq\,\Big(\frac{\kappa}{\varepsilon}\Big)^{1/r},
\end{equation}
where $r$ is a given approximation order and the norm is
the $H^1$-norm, the norm associated with the Schr\"odinger 
equation. Such expansions can be constructed using the 
exponential decay of the wavefunctions, and when indicated 
also of their integer and fractional order mixed derivatives
\cite{Kreusler-Yserentant,Ys_2010,Ys_2011}. The solution 
of the approximate equation, which serves as a quasi-exact 
substitute of the original wavefunction $u$, can then, for 
arbitrarily small $\varepsilon>0$, be approximated by a 
linear combination of 
\begin{equation}    \label{eq1.4}
n\;\leq\,2\;\Big(\frac{2\kappa}{\varepsilon}\Big)^{1/r}
\end{equation}
Gauss functions of the same polynomial degree up to an 
$H^1$-error $\varepsilon$, provided the width of the smoothing 
kernel $K$ is sufficiently small in dependence of the 
approximation order $r$. The approximation of the original, 
singular wavefunction $u$ up to a very small, negligible residual 
error determined by the approximations of the Coulomb potentials 
and the inverse of the shifted Laplace operator thus does not 
require substantially more terms than that of its smoothed 
variant $K*u$. 

The rest of this paper is organized as follows. In
Sect.~\ref{sec2}, the Schr\"odinger equation is precisely 
stated, as an eigenvalue problem in weak form on the 
Sobolev space~$H^1$. Some properties of its solutions 
are shortly discussed. More information on the mathematics 
behind the electronic Schr\"odinger equation can be found 
in \cite{Ys_2010}. In Sect.~\ref{sec3}, the equation is 
brought into the form from which our approximation result 
is derived. Sect.~\ref{sec4} discusses the sensitivity of the 
transformed equation from Sect.~\ref{sec3} to perturbations 
of the interaction potential and the inverse of the shifted 
Laplace operator. In Sect.~\ref{sec5}, we analyze 
approximations due to Beylkin and Monz\'{o}n 
\cite{Beylkin-Monzon} of the functions $1/r^{\,\beta}$ by 
series of exponential functions. The essential point here 
is that we have explicit access to the expansion coefficients 
and that these are intimately connected with a scale of 
Sobolev norms. These approximations are used in 
Sect.~\ref{sec6} to set up the mentioned approximations of 
the Coulomb potentials and the inverse of the shifted 
Laplace operator. Sect.~\ref{sec7} is of rather technical 
nature and devoted to estimates of the norms of the parts 
into which the approximations of the single operators 
split. The key point is that these norms decay exponentially,
a property that forms the basis of our final approximation 
result in Sect.~\ref{sec8}.


\section{The weak form of the equation}
\label{sec2}

\setcounter{equation}{0}
\setcounter{lemma}{0}
\setcounter{theorem}{0}

The solution space of the electronic Schr\"odinger equation 
is the Hilbert space $H^1$ that consists of the one times
weakly differentiable, square integrable functions 
\begin{equation}    \label{eq2.1}
u:(\mathbb{R}^3)^N\!\to\,\mathbb{R}:
(\x_1,\ldots,\x_N)\to u(\x_1,\ldots,\x_N)
\end{equation}
with square integrable first-order weak derivatives. The
norm $\|\cdot\|_1$ on $H^1$ is composed of the $L_2$-norm
$\|\cdot\|_0$ and the $H^1$-seminorm $|\cdot|_1$, the
$L_2$-norm of the gradient. The space $H^1$ is the space 
of the wavefunctions for which the total position 
probability remains finite and the expectation value of 
the kinetic energy can be given a meaning. By $\D$ we 
denote the space of all infinitely differentiable functions
(\ref{eq2.1}) with bounded support. The functions in $\D$ 
form a dense subset of $L_2$ and of $H^1$ as well. Before 
we can state the equation, we have to study the potential
\begin{equation}    \label{eq2.2}
V(\x)\;=\;-\;\sumi\sum_{\nu=1}^K\frac{Z_\nu}{|\x_i-\a_\nu|}
\,+\,\frac12 \sumij\frac{1}{|\x_i-\x_j|}
\vspace{-0.5\belowdisplayskip}
\end{equation}
in the Schr\"odinger operator (\ref{eq1.1}) that is composed
of the nucleus-electron interaction potential, the first
term in (\ref{eq2.2}), and the electron-electron interaction 
potential. 
\begin{lemma}       \label{lm2.1}
For arbitrary functions \rmref{eq2.1} in $\D$ and with 
that also $H^1$,
\begin{equation}    \label{eq2.3}
\|Vu\|_0\,\leq\,(2\,Z+N-1)\,N^{\,1/2}\,|\,u\,|_1,
\end{equation}
where $Z=\sum_\nu Z_\nu$ is the total charge of the
nuclei.
\end{lemma}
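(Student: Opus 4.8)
The plan is to estimate $\|Vu\|_0$ termwise by the triangle inequality and to control each of the resulting singular factors by Hardy's inequality applied in the appropriate three-dimensional block of variables. Writing out the potential \rmref{eq2.2}, we obtain
\[
\|Vu\|_0\,\leq\,\sumi\sum_{\nu=1}^K Z_\nu\,\Big\|\frac{u}{|\x_i-\a_\nu|}\Big\|_0
+\frac12\sumij\Big\|\frac{u}{|\x_i-\x_j|}\Big\|_0 ,
\]
so it suffices to bound each of the norms on the right by a partial seminorm of $u$.

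The central tool is the three-dimensional Hardy inequality
\[
\int_{\mathbb{R}^3}\frac{|\varphi(\x)|^2}{|\x|^2}\dx
\,\leq\,4\int_{\mathbb{R}^3}|\nabla\varphi(\x)|^2\dx ,
\]
valid for all $\varphi\in C_0^\infty(\mathbb{R}^3)$ with the sharp constant $4=(2/(3-2))^2$. For the nucleus-electron terms I would fix all variables except $\x_i$, apply this inequality in $\x_i$ after the harmless shift $\x_i\mapsto\x_i-\a_\nu$, and integrate the result over the remaining variables by Fubini; since $u\in\D$ is smooth with bounded support this is legitimate and yields
\[
\Big\|\frac{u}{|\x_i-\a_\nu|}\Big\|_0\,\leq\,2\,\|\nabla_i u\|_0 .
\]
The same argument applied to the variable $\x_i$ with $\x_j$ held fixed, the shift now being $\x_i\mapsto\x_i-\x_j$, gives the analogous bound for the electron-electron terms,
\[
\Big\|\frac{u}{|\x_i-\x_j|}\Big\|_0\,\leq\,2\,\|\nabla_i u\|_0 .
\]

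It then remains to assemble these estimates with the correct constants. For the nucleus-electron part, summing over $\nu$ produces the factor $Z=\sum_\nu Z_\nu$, and summing over $i$ together with the Cauchy-Schwarz inequality $\sumi\|\nabla_i u\|_0\leq N^{1/2}(\sumi\|\nabla_i u\|_0^2)^{1/2}=N^{1/2}|u|_1$ yields the bound $2\,Z\,N^{1/2}|u|_1$. For the electron-electron part, each of the $N-1$ indices $j\neq i$ contributes the same $2\|\nabla_i u\|_0$, so the factor $\tfrac12$ and the double sum combine to $(N-1)\sumi\|\nabla_i u\|_0\leq(N-1)\,N^{1/2}|u|_1$. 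Adding the two contributions gives the asserted constant $(2\,Z+N-1)\,N^{1/2}$. The only genuinely delicate point is the very first one --- recognizing that each singular Coulomb factor, whether attached to a nucleus or to another electron, is tamed by exactly one Hardy inequality in a single electronic coordinate, so that the estimate depends solely on the $H^1$-seminorm $|u|_1$ and never on the full $H^1$-norm; the remainder is the bookkeeping of the summation that produces precisely the $2\,Z$ and the $N-1$ contributions.
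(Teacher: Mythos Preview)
Your proof is correct and follows precisely the approach the paper indicates: apply the three-dimensional Hardy inequality \rmref{eq2.4} in a single electronic coordinate to each Coulomb term, then assemble via the triangle and Cauchy--Schwarz inequalities. The paper does not spell out the details beyond naming Hardy's inequality as the key ingredient, but the same bookkeeping with the triangle and Cauchy--Schwarz inequalities appears explicitly in the proof of Lemma~\ref{lm7.4} and the discussion around \rmref{eq6.11}, so your argument is exactly what was intended.
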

The proof of Lemma~\ref{lm2.1} is based on the 
three-dimensional Hardy inequality
\begin{equation}    \label{eq2.4}
\int\!\frac{1}{\,|\x|^2}\,v^2\dx \,\leq\,
4\int\!|\nabla v|^2\dx
\end{equation}
for infinitely differentiable functions 
$v:\mathbb{R}^3\to\mathbb{R}$ with compact support.
By (\ref{eq2.3}),
\begin{equation}    \label{eq2.5}
a(u,v)=\int\big\{\nabla u\cdot\nabla v+Vuv\big\}\dx\,=\,(Hu,v)
\end{equation}
is a $H^1$-bounded bilinear form on $\D$, where 
$(\cdot\,,\cdot)$ is the $L_2$-inner product. It can 
be uniquely extended to a bounded bilinear form on 
$H^1$. In this setting, a function $u\neq 0$ in $H^1$ 
is an eigenfunction of the Schr\"odinger operator 
(\ref{eq1.1}) for the eigenvalue $\lambda$ if 
\begin{equation}    \label{eq2.6}
a(u,v)=\lambda (u,v), \quad v\in H^1.
\end{equation}
The weak form (\ref{eq2.6}) of the eigenvalue equation 
$Hu=\lambda u$ in particular fixes the behavior of the 
eigenfunctions at the singularities of the interaction 
potential and at infinity. For normed $u$, $a(u,u)$ 
is the expectation value of the total energy.  

It should be noted that only those eigenfunctions are 
physically admissible that are antisymmetric with 
respect to the permutation of the positions $\x_i$ of 
electrons of the same spin. This is a consequence of 
the Pauli principle. We will not utilize this property. 
We will, however, restrict ourselves to bound states 
of the system under consideration, eigenfunctions $u$ 
for eigenvalues $\lambda$ below the ionization threshold, 
a quantity that is bounded from above by the value zero. 
Such eigenfunctions and their first order weak derivatives 
decay exponentially in the $L_2$-sense. There is a 
constant $\mu>0$, depending on the distance of the 
given eigenvalue $\lambda$ to the ionization threshold, 
such that the function
\begin{equation}    \label{eq2.7}
\x\;\to\;\e^{\,\mu|\,\x\,|}u(\x)
\end{equation}
is square integrable and even possesses square integrable
first and certain higher integer and fractional order 
weak derivatives. This means among other things that the 
Fourier transforms of these eigenfunctions are 
real-analytic and that their partial derivatives of 
arbitrary order are bounded. Such properties and the fact 
that the eigenvalues under consideration are less than 
zero will play an essential role in our reasoning. 
More details, about this and on the electronic 
Schr\"odinger equation in general, can be found in 
\cite{Ys_2010} and, concerning additional information 
on the regularity properties of the exponentially 
weighted wavefunctions, in \cite{Kreusler-Yserentant} 
and \cite{Ys_2011}.


\section{An operator version}
\label{sec3}

\setcounter{equation}{0}
\setcounter{lemma}{0}
\setcounter{theorem}{0}

We will fix the eigenvalue $\lambda<0$ under consideration 
for the rest of this paper and introduce at first the 
inverse of the correspondingly shifted Laplace operator 
\mbox{$-\Delta-\lambda$}, a bounded linear operator $G$ 
from the space $L_2$ of the square integrable functions to 
the Sobolev space $H^2$ of the square integrable functions 
with square integrable first and second order weak 
derivatives. For rapidly decreasing functions $f$,
\begin{equation}    \label{eq3.1}
(Gf)(\x)\,=\,\Big(\frac{1}{\sqrt{2\pi}}\Big)^{3N}\!
\int \frac{1}{\,|\vomega|^2-\,\lambda}\;
\fourier{f}\,(\vomega)\,\e^{\,\i\,\vomega\,\cdot\,\x}\domega.
\end{equation}
For $u\in H^1$ and infinitely differentiable functions 
$v\in\D$, with compact support,
\begin{equation}    \label{eq3.2}
(u,G^{-1}v)\;=\,\int\nabla u\cdot\nabla v\dx\;-\,\lambda (u,v).
\end{equation}
For all square integrable functions $f$ and $g$,
\begin{equation}    \label{eq3.3}
(Gf,g)\,=\,(f,Gg).
\end{equation}
\begin{lemma}       \label{lm3.1}
A function $u\in H^1$ solves the eigenvalue equation 
\rmref{eq2.6} if and only if
\begin{equation}    \label{eq3.4}
u\;+\,G\,Vu\;=\;0
\end{equation}
and is therefore automatically contained in $H^2$.
\end{lemma}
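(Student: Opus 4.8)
The plan is to establish the equivalence by testing the equation against the functions $G^{-1}v$ with $v\in\D$, exploiting that $\D$ is dense in $H^1$ and, crucially, that $\lambda<0$. First I would record that $Vu\in L_2$ for every $u\in H^1$ by Lemma~\ref{lm2.1}, so that $GVu\in H^2\subset H^1$ is well defined and $u+GVu$ is a genuine element of $H^1$. The core computation is to evaluate, for an arbitrary $v\in\D$, the inner product $(u+GVu,G^{-1}v)$. Splitting it into two parts, I would apply \rmref{eq3.2} to the first term to obtain $(u,G^{-1}v)=\int\nabla u\cdot\nabla v\dx-\lambda(u,v)$, and the symmetry relation \rmref{eq3.3} together with $G(G^{-1}v)=v$ to the second to obtain $(GVu,G^{-1}v)=(Vu,v)$. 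Adding these and recognizing $\int\nabla u\cdot\nabla v\dx+(Vu,v)=a(u,v)$ from \rmref{eq2.5} yields the identity
\[
(u+GVu,\,G^{-1}v)\;=\;a(u,v)-\lambda(u,v),\qquad v\in\D.
\]

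From this identity both directions follow. If $u$ solves \rmref{eq2.6}, the right hand side vanishes for all $v\in\D$, so that $h:=u+GVu\in H^1$ satisfies $(h,G^{-1}v)=0$ for every $v\in\D$. Conversely, if $h=0$, the identity gives $a(u,v)=\lambda(u,v)$ on $\D$ and hence, by the density of $\D$ in $H^1$ and the $H^1$-boundedness of the form $a$, on all of $H^1$, which is \rmref{eq2.6}. The remaining point is to show that $(h,G^{-1}v)=0$ for all $v\in\D$ forces $h=0$. Here I would invoke \rmref{eq3.2} once more, now with $h$ in place of $u$, to rewrite this as $\int\nabla h\cdot\nabla v\dx-\lambda(h,v)=0$ for all $v\in\D$, extend it to all $v\in H^1$ by density, and then test with $v=h$ itself. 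This yields $|\,h\,|_1^2-\lambda\|h\|_0^2=0$, and since $\lambda<0$ both summands are nonnegative, which forces $\|h\|_0=0$ and therefore $h=0$.

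The steps I would treat with the most care are the two density passages from $\D$ to $H^1$ and the verification that the symmetry relation \rmref{eq3.3} is applicable, that is, that both arguments lie in $L_2$: this holds for $Vu$ by Lemma~\ref{lm2.1} and for $G^{-1}v=(-\Delta-\lambda)v\in\D$ trivially. Once $h=0$ is established the additional regularity claim is immediate, since $u=-GVu$ lies in the range $H^2$ of $G$. I expect the genuine conceptual obstacle to be precisely the role of the sign condition $\lambda<0$ in the final uniqueness argument: without it the homogeneous equation $(-\Delta-\lambda)h=0$ could admit nontrivial $H^1$-solutions, the step $\|h\|_0=0$ would fail, and the asserted equivalence between \rmref{eq2.6} and \rmref{eq3.4} would no longer hold.
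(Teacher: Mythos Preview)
Your argument is correct. The core identity $(u+GVu,G^{-1}v)=a(u,v)-\lambda(u,v)$ for $v\in\D$ and the direction ``\rmref{eq3.4}$\Rightarrow$\rmref{eq2.6}'' match the paper exactly. For the converse you proceed differently: you keep the test functions $G^{-1}v$, rewrite $(h,G^{-1}v)=0$ via \rmref{eq3.2} as $\int\nabla h\cdot\nabla v\dx-\lambda(h,v)=0$, pass to $v=h$ by density, and invoke $\lambda<0$ to force $h=0$. The paper instead tests the eigenvalue equation \rmref{eq2.6} with $Gv\in H^1$ and reads the identity backwards to obtain $(u+GVu,v)=0$ for every $v\in\D$, whence $h=0$ by density of $\D$ in $L_2$ alone, without a separate coercivity step. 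Your route is perfectly valid and makes the role of $\lambda<0$ explicit at the uniqueness step; the paper's substitution trick is a shade shorter and uses $\lambda<0$ only implicitly through the existence of $G$.
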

\begin{proof}
As follows from (\ref{eq3.2}) and (\ref{eq3.3}), for 
all $u\in H^1$ and all $v\in\D$
\begin{displaymath}
(u\;+\,G\,Vu,G^{-1}v)\,=\,a(u,v)-\lambda(u,v).
\end{displaymath}
Since $\D$ is dense in $H^1$, a function $u\in H^1$
that satisfies the equation (\ref{eq3.4}) solves 
therefore also the eigenvalue equation (\ref{eq2.6}).
If the function $u\in H^1$ solves conversely the 
equation (\ref{eq2.6}), for all functions $v\in\D$
\begin{displaymath}
0\;=\;a(u,Gv)-\lambda(u,Gv)\,=\,(u\;+\,G\,Vu,G^{-1}Gv)
\,=\,(u\;+\,G\,Vu,v).
\end{displaymath}
As $\D$ is dense in $L_2$, $u$ solves therefore the
equation (\ref{eq3.4}). Since the multiplication 
operator $V$ maps $H^1$ into $L_2$ and the operator
$G$ the space $L_2$ into $H^2$, this means at the 
same time that $u$ is contained in $H^2$.
\qed
\end{proof}

In the next step, we split the potential part $Vu$ into 
the sum of a smooth part $QVu$, the convolution $K*Vu$
of $Vu$ with a Gaussian kernel
\begin{equation}    \label{eq3.5}
K(\x)\,=\,\left(\frac{1}{4\pi\gamma}\right)^{3N/2}\!
\exp\left(-\frac{1}{4\gamma}\;|\,\x\,|^2\right)
\end{equation}
of a width that is determined by the constant $\gamma<1$ 
and will later be adapted to the needs, and the 
complementary part $PVu$. The equation (\ref{eq3.4}), 
which is by Lemma~\ref{lm3.1} equivalent to the original 
Schr\"odinger equation (\ref{eq2.6}), turns then into
\begin{equation}    \label{eq3.6}
u\;+\,GPVu\;=\,-\,GQVu.
\end{equation}
The operators $G$, $Q$, and $P=I-Q$ commute. This 
follows from the representation
\begin{equation}    \label{eq3.7}
(Qf)(\x)\,=\,\Big(\frac{1}{\sqrt{2\pi}}\Big)^{3N}\!
\int \e^{-\gamma\,|\vomega|^2}\,
\fourier{f}\,(\vomega)\,\e^{\,\i\,\vomega\,\cdot\,\x}\domega
\end{equation}
of the convolution operator in terms of the Fourier 
transform. As the wavefunction~$u$ under consideration 
satisfies the equation (\ref{eq3.4}), therefore 
$GQVu=-\,Qu$. Thus 
\begin{equation}    \label{eq3.8}
u\;+\,GPVu\;=\,Qu.
\end{equation}
The point is that, for sufficiently small $\gamma$, 
that is, a sufficiently small width of the smoothing 
kernel, the solution $u$ is completely determined 
by its regular part $Qu$.
\begin{lemma}       \label{lm3.2}
For all square integrable functions $f$ and all $\gamma<1$,
\begin{equation}    \label{eq3.9}
\|GPf\|_1\,\leq\,\sqrt{\gamma}\;\|f\|_0.
\end{equation}
\end{lemma}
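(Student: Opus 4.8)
The plan is to diagonalize all three operators simultaneously by the Fourier transform. By the representations \rmref{eq3.1} and \rmref{eq3.7}, $G$ and $Q$ act as Fourier multipliers, with symbols $(|\vomega|^2-\lambda)^{-1}$ and $\e^{-\gamma|\vomega|^2}$; hence $P=I-Q$ has symbol $1-\e^{-\gamma|\vomega|^2}$ and
\[
\fourier{GPf}(\vomega)\,=\,\frac{1-\e^{-\gamma|\vomega|^2}}{|\vomega|^2-\lambda}\,\fourier{f}(\vomega).
\]
Since $\lambda<0$ the symbol of $G$ is bounded and that of $P$ lies in $[0,1]$, so $GPf$ is well defined for every $f\in L_2$ and lies in $H^2\subset H^1$; it therefore suffices to prove the estimate for rapidly decreasing $f$, for which \rmref{eq3.1} applies literally, and to extend it to all of $L_2$ by density. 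For such $f$ the Plancherel identity turns the norms into $\|GPf\|_1^2=\int(1+|\vomega|^2)\,|\fourier{GPf}(\vomega)|^2\domega$ and $\|f\|_0^2=\int|\fourier{f}(\vomega)|^2\domega$, so the claim follows at once from the pointwise multiplier bound
\[
(1+s)\,\frac{(1-\e^{-\gamma s})^2}{(s-\lambda)^2}\,\leq\,\gamma,
\qquad s=|\vomega|^2\geq 0 .
\]

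To establish this scalar inequality I would first discard the shift: because $-\lambda>0$ we have $s-\lambda\geq s$, so for $s>0$ (the case $s=0$ being trivial) the left-hand side is bounded by $(1+s)(1-\e^{-\gamma s})^2/s^2$. Substituting $t=\gamma s$ and using $\gamma<1$ gives
\[
(1+s)\,\frac{(1-\e^{-\gamma s})^2}{s^2}
=\gamma\,(\gamma+t)\,\frac{(1-\e^{-t})^2}{t^2}
\,\leq\,\gamma\,(1+t)\,\frac{(1-\e^{-t})^2}{t^2},
\]
so everything reduces to the single elementary inequality $(1+t)(1-\e^{-t})^2\leq t^2$, equivalently $1-\e^{-t}\leq t/\sqrt{1+t}$, for $t\geq 0$.

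This last inequality is the only real work, though it remains elementary. I would prove it through the chain $1-\e^{-t}\leq 2t/(2+t)\leq t/\sqrt{1+t}$. The second step is equivalent to $4(1+t)\leq(2+t)^2$, that is, to $0\leq t^2$. For the first step, set $\chi(t)=(2+t)\e^{-t}+t-2$; then $\chi(0)=0$ and $\chi'(t)=1-(1+t)\e^{-t}\geq 0$, since $\e^{t}\geq 1+t$, so $\chi(t)\geq 0$, which rearranges to $1-\e^{-t}\leq 2t/(2+t)$ on $[0,2)$ and is trivial for $t\geq 2$, where the right-hand side already exceeds $1$. Combining the pointwise bound with the Plancherel identities yields $\|GPf\|_1^2\leq\gamma\,\|f\|_0^2$, and the lemma follows. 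The main subtlety is not any single estimate but the bookkeeping that keeps the constant exactly $\sqrt{\gamma}$: one must avoid the cruder bound $(1-\e^{-\gamma s})^2\leq\gamma s$, which loses a factor and fails for small $-\lambda$ and large $s$, and instead exploit the full shape of $(1-\e^{-t})^2/t^2$ against $1+t$.
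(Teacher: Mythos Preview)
Your proof is correct and follows essentially the same route as the paper: pass to the Fourier side, discard the shift $-\lambda>0$, substitute $t=\gamma|\vomega|^2$ and use $\gamma<1$ to reduce everything to the scalar inequality $(1+t)(1-\e^{-t})^2\leq t^2$. The paper simply states this elementary inequality without proof, whereas you supply one via the clean chain $1-\e^{-t}\leq 2t/(2+t)\leq t/\sqrt{1+t}$; that is a genuine addition, not a different method.
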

\begin{proof}
The proof is a rather immediate consequence of the Fourier 
representation
\begin{displaymath}
\|GPf\|_1^2\;=\,\int\big(1+|\vomega|^2\big)\bigg(
\frac{1-\,\e^{-\gamma\,|\vomega|^2}}{|\vomega|^2-\lambda}
\bigg)^{\!2}\,|\fourier{f}(\vomega)|^2\domega
\end{displaymath}
of the square of the norm to be estimated, of the 
elementary estimate
\begin{displaymath}
(1+\,t)\left(\frac{1-\,\e^{-t}}{t}\right)^2\leq\,1
\end{displaymath}
that holds for all $t>0$, and of Plancherel's theorem.
\qed
\end{proof}
The $L_2$-norm of $Vu$ can by (\ref{eq2.3}) be estimated 
by the $H^1$-norm of $u$. The operator
\begin{equation}    \label{eq3.10}
T:H^1\to \,H^1:u\,\to\,GPVu
\end{equation}
is thus for sufficiently small $\gamma$ contractive, 
more precisely, introducing the constant
\begin{equation}    \label{eq3.11}
\theta(N,Z)\,=\,(2\,Z+N-1)\,N^{\,1/2}
\end{equation}
already known from Lemma~\ref{lm2.1}, if the condition
\begin{equation}    \label{eq3.12}
\sqrt{\gamma}\;\theta(N,Z)\,<\,1
\end{equation}
is satisfied. For any given $f\in H^1$, and in 
particular for $f=Qu$,  the equation
\begin{equation}    \label{eq3.13}
u\,+Tu\,=\,f
\end{equation}
possesses then the uniquely determined solution
\begin{equation}    \label{eq3.14}
u\;=\;(I+T)^{-1}f\,=\,\sum_{\nu=0}^\infty (-1)^\nu T^\nu\!f.
\end{equation}
This representation of the wavefunction $u$ represents
the foundation of our theory.


\section{The influence of perturbations}
\label{sec4}

\setcounter{equation}{0}
\setcounter{lemma}{0}
\setcounter{theorem}{0}

The basic idea is the approximation of the symbols of 
the operators of which the operator $T$ is composed 
with extremely high accuracy by series of Gauss 
functions. In this section we study how the solution 
of the equation (\ref{eq3.13}) reacts to such 
perturbations of the operator $T$. Our starting point 
will be approximations
\begin{equation}    \label{eq4.1}
\VS:H^1\to\,L_2, \quad \GS:L_2\,\to\,H^2
\end{equation}
of the multiplication operator $u\to Vu$ and of the 
operator (\ref{eq3.1}) for which
\begin{equation}    \label{eq4.2}
\|Vu-\VS u\|_0\,\leq\,\theta(N,Z)\,\varepsilon\,|\,u\,|_1,
\quad
\|Gf-\GS f\|_1\,\leq\,\varepsilon\,\|Gf\|_1
\end{equation}
holds for all $u\in H^1$ and $f\in L_2$, respectively, 
where $\varepsilon$ is a very small constant, by orders 
of magnitude less than the accuracy of the physical 
model and independent of the given eigenvalue. The 
difference of the perturbed operator 
\begin{equation}    \label{eq4.3}
\TS\,=\,\GS P\VS
\end{equation}
and the operator (\ref{eq3.10}) attains then the 
representation
\begin{equation}    \label{eq4.4} 
\TS\,-\,T\;=\;(\GS-G)PV\,+\,GP(\VS-V)\,+\,(\GS-G)P(\VS-V),
\end{equation}
from which by assumption (\ref{eq4.2}) at first 
the estimate
\begin{equation}    \label{eq4.5}
\|\TS u\,-Tu\|_1\;\leq\;
\varepsilon\,\|GPVu\|_1\,+\,(1+\varepsilon)\|GP(\VS u-Vu)\|_1
\end{equation}
follows. It implies by (\ref{eq3.9}), Lemma~\ref{lm2.1}, 
and (\ref{eq4.2}) the estimate
\begin{equation}    \label{eq4.6}
\|\TS u\,-Tu\|_1\,\leq\,\delta\,|\,u\,|_1, \quad
\delta\,=\,
\theta(N,Z)\,\gamma^{\,1/2}\,(2\varepsilon+\varepsilon^2).
\end{equation}
That is, the perturbed operator (\ref{eq4.3}) is already 
for $\delta<1-\|T\|_1$ contractive and differs with the
given $\delta$ only very little from $T$. The distance 
between our eigenfunction~$u$, the solution of the 
original equation (\ref{eq3.8}) here rewritten as
\begin{equation}    \label{eq4.7}
u\,+\TS u\,=\,Qu\,+(\TS u-Tu),
\end{equation}
and the solution $\uS$ of the modified equation
\begin{equation}    \label{eq4.8}
\uS\,+\TS\uS\,=\,Qu,
\end{equation}
in which the term $\TS u-Tu=P(\GS\VS u-GVu)$ is neglected, 
satisfies then the estimate
\begin{equation}    \label{eq4.9}
\|u\,-\uS\,\|_1\,\leq\,\frac{\delta}{1-\|\TS\|_1}\;|\,u\,|_1.
\end{equation}
The accuracy of the approximation of the operators thus 
transfers almost completely to the solution $\uS$ of 
the perturbed equation.

The smooth part $Qu$ of the eigenfunction $u$, its convolution 
with the kernel (\ref{eq3.5}), reflects the global structure 
of $u$. The level of resolution is determined by the width of 
the kernel. The transition to the solution $\uS$ of the perturbed
equation (\ref{eq4.8}) adds the missing information on the 
singularities of $u$ up to a many orders of magnitude higher 
level of resolution and boosts the accuracy correspondingly. 
If $\gamma<1$,
\begin{equation}    \label{eq4.10}
\|u-Qu\|_1\,\leq\,\gamma^{\,1/2}\,|\,u\,|_2,
\end{equation}
as is shown analogously to (\ref{eq3.9}). This has to be 
related to the $H^1$-distance between the exact eigenfunction 
$u$ and its approximation $\uS$, that behaves by (\ref{eq4.9}) 
like 
\begin{equation}    \label{eq4.11}
\|u\,-\uS\,\|_1\,\lesssim\,\gamma^{\,1/2}\,\varepsilon\,|\,u\,|_1.
\end{equation}
That is, one gains a factor of order $\varepsilon$, of
the size of the approximation errors (\ref{eq4.2}).
This justifies to neglect the term $\TS u-Tu$ in
equation (\ref{eq4.7}).

\pagebreak

As stated in Sect.~\ref{sec2}, the Fourier transform of the 
eigenfunction $u$ is real-analytic and all its derivatives
are bounded. Like its Fourier transform, $f=Qu$ is thus a 
rapidly decreasing function  and can therefore be expanded 
into a series of Gauss-Hermite functions or more precisely
of eigenfunctions of the harmonic oscillator that converges 
in the $H^1$-norm super-algebraically. Our goal is to show 
that, with a corresponding choice of the approximations 
(\ref{eq4.1}) of $G$ and $V$, the solution 
\begin{equation}    \label{eq4.12}  
\uS\;=\;\sum_{\nu=0}^\infty (-1)^\nu \TS^\nu\!f
\end{equation}
of the equation (\ref{eq4.8}) can be approximated up to 
given accuracy with a comparable number of Gauss functions 
as $f$, provided the width of the smoothing kernel is 
chosen sufficiently small in dependence of the 
approximation order aimed for.


\section{The core of the approximation process}
\label{sec5}

\setcounter{equation}{0}
\setcounter{lemma}{0}
\setcounter{theorem}{0}

The key to our approximation of the potential $V$ and 
of the inverse $G$ of the shifted Laplace operator 
is the approximation of the functions $1/r^{\,\beta}$, 
$\beta=1/2$ and~$1$, with very high relative accuracy
by infinite series of exponential functions. One 
obtains such expansions discretizing representations 
of $1/r^{\,\beta}$ in form of integrals over the real 
axis. We will work with a particular such construction 
due to Beylkin and Monz\'{o}n~\cite{Beylkin-Monzon}. 
Their starting point is a representation of the gamma 
function
\begin{equation}    \label{eq5.1}
\Gamma(z)\,=\,\int_0^\infty\e^{-t}t^{\,z-1}\dt,
\quad \mathrm{Re}\,z>0,
\end{equation}
in terms of an integral of a rapidly decreasing function.
\begin{lemma}       \label{lm5.1}
For all real $r>0$ and all complex $z$ with positive real part,
\begin{equation}    \label{eq5.2}
\Gamma(z)\,=\;
r^{\,z}\int_{-\infty}^\infty\!\exp(-\,r\,\e^{\,t}+z\,t)\dt.
\end{equation}
\end{lemma}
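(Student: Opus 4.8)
The plan is to derive \rmref{eq5.2} directly from the standard integral representation \rmref{eq5.1} of the gamma function by a single change of variables. In \rmref{eq5.1} the integration runs over $s\in(0,\infty)$ with integrand $\e^{-s}s^{z-1}$; the appearance of $\e^{\,t}$ inside the exponential on the right-hand side of \rmref{eq5.2} suggests the substitution $s=r\,\e^{\,t}$, which maps $t\in(-\infty,\infty)$ bijectively, smoothly, and strictly monotonically onto $s\in(0,\infty)$ because $r>0$. This is the whole idea: the right-hand side of \rmref{eq5.2} is nothing but \rmref{eq5.1} written in the variable $t$.

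Carrying this out, I would compute $\diff{s}=r\,\e^{\,t}\dt=s\,\dt$, so that $\dt=\diff{s}/s$, and then rewrite the two factors of the right-hand integrand. Since the base $\e^{\,t}=s/r$ is a positive real number, the complex power is unambiguous and $\e^{\,z\,t}=(s/r)^z=s^z/r^z$, while $\exp(-\,r\,\e^{\,t})=\e^{-s}$. Substituting gives
\begin{displaymath}
\int_{-\infty}^\infty\!\exp(-\,r\,\e^{\,t}+z\,t)\dt
\;=\;\frac{1}{r^z}\int_0^\infty\e^{-s}\,s^{z-1}\diff{s},
\end{displaymath}
and multiplying by $r^z$ and invoking \rmref{eq5.1} yields exactly $\Gamma(z)$.

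The only point that needs care, and the step I would treat as the main, if modest, obstacle, is the justification of this change of variables, which amounts to checking that the integrand on the right of \rmref{eq5.2} is absolutely integrable so that the transformation is legitimate (splitting into real and imaginary parts, each a real integral to which the substitution theorem applies). Writing $z=x+\i\,y$ with $x=\mathrm{Re}\,z>0$, the modulus of the integrand is $\exp(-\,r\,\e^{\,t}+x\,t)$. As $t\to+\infty$ the term $-\,r\,\e^{\,t}$ forces a doubly exponential decay, while as $t\to-\infty$ one has $\e^{\,t}\to 0$ and the linear term $x\,t\to-\infty$ drives the integrand to zero precisely because $x>0$; hence the integral converges absolutely for every $z$ with positive real part, which is exactly the hypothesis of the lemma. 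With absolute convergence secured, the substitution above is rigorous and the claimed identity follows.
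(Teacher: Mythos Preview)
Your proof is correct and follows essentially the same approach as the paper: the substitution $s=r\,\e^{\,t}$ (written as $\varphi(t)=r\,\e^{\,t}$ in the paper) applied to the defining integral \rmref{eq5.1}. Your version is more detailed in that you verify absolute integrability to justify the change of variables, which the paper leaves implicit.
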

\begin{proof}
The substitution $\varphi(t)=r\,\e^{\,t}$ yields 
\begin{displaymath}
\Gamma(z)\,=\,\int_{-\infty}^\infty
\e^{-\varphi(t)}\varphi(t)^{z-1}\varphi'(t)\dt.
\end{displaymath}
Written out, this is the representation (\ref{eq5.2}).
\qed
\end{proof}
The representation (\ref{eq5.2}) of $\Gamma(z)$ leads 
conversely to the representation 
\begin{equation}    \label{eq5.3}
\frac{1}{r^{\,\beta}}\,=\,\frac{1}{\Gamma(\beta)}
\int_{-\infty}^\infty\!\exp(-\,r\,\e^{\,t}+\beta t)\dt
\end{equation}
of the function $r\to 1/r^{\,\beta}$, $r>0$, for arbitrary 
exponents $\beta>0$. The integrand decays for $t\to-\infty$ 
like $\e^{\,\beta t}$, and for $t\to\infty$ even more 
rapidly. The idea is to discretize this integral with the 
trapezoidal rule. This yields the approximation
\begin{equation}    \label{eq5.4}
\frac{1}{r^{\,\beta}}\,\approx\,\frac{1}{\Gamma(\beta)}\;
h\!\sum_{k=-\infty}^\infty\!\e^{\,\beta kh}
\exp(-\,\e^{\,kh} r)
\,=\,\frac{1}{r^{\,\beta}}\,\phi(\ln r)
\end{equation}
that depends only on the distance $h$ of the quadrature 
points. The function 
\begin{equation}    \label{eq5.5}
\phi(s)\,=\,\frac{1}{\Gamma(\beta)}\;h\!\sum_{k=-\infty}^\infty 
\exp\big(\!-\,\e^{\,kh+s}+\beta(kh+s)\big)
\end{equation}
is continuous due to the uniform convergence of the 
series on bounded intervals and periodic with period 
$h$ by definition. The relative error is therefore 
uniformly bounded in $r$ and attains its maximum 
on every interval $\e^{\,s}\leq r\leq\e^{\,s+h}$. 
The high accuracy of the approximation is a 
consequence of the following observation.
\begin{lemma}       \label{lm5.2} 
The function \rmref{eq5.5} possesses a series 
representation
\begin{equation}    \label{eq5.6}
\phi(s)\,=\,1\,+\,\frac{2}{\Gamma(\beta)}\,
\sum_{\ell=1}^\infty\, 
\bigg|\,\Gamma\bigg(\beta-\,\i\,\frac{2\pi\ell}{h}\bigg)\bigg|\,
\sin\bigg(\frac{2\pi\ell}{h}\,(s-s_\ell)\bigg),
\end{equation}
with certain phase shifts $s_\ell$. 
\end{lemma}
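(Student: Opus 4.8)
The plan is to use that $\phi$ is by construction periodic with period $h$ and to compute its Fourier series. I would write
\begin{equation*}
\phi(s)\,=\sum_{\ell=-\infty}^\infty c_\ell\,\e^{\,\i\,2\pi\ell s/h},
\qquad
c_\ell\,=\,\frac1h\int_0^h\phi(s)\,\e^{-\i\,2\pi\ell s/h}\,\diff{s},
\end{equation*}
the coefficients being integrals over a single period. Substituting the defining series \rmref{eq5.5} and cancelling the prefactor $h$, the decisive step is an \emph{unfolding}: the summand depends on $k$ and $s$ only through the combination $kh+s$, and since $\e^{\,\i\,2\pi\ell k}=1$ for integers $k$ and $\ell$, the sum over $k$ together with the integral over the single period $[0,h]$ reassembles into one integral over the whole real axis. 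This turns the coefficient into
\begin{equation*}
c_\ell\,=\,\frac{1}{\Gamma(\beta)}\int_{-\infty}^\infty
\exp\!\Big(\!-\e^{\,t}+\big(\beta-\i\,\tfrac{2\pi\ell}{h}\big)t\Big)\dt.
\end{equation*}

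At this point I would invoke Lemma~\ref{lm5.1} with $r=1$ and the complex argument $z=\beta-\i\,2\pi\ell/h$, whose real part $\beta$ is positive, to identify the integral as a value of the gamma function and obtain $c_\ell=\Gamma(\beta-\i\,2\pi\ell/h)/\Gamma(\beta)$. In particular $c_0=1$, which produces the constant term. Since $\beta$ is real, $\Gamma(\beta+\i\,2\pi\ell/h)=\overline{\Gamma(\beta-\i\,2\pi\ell/h)}$, so $c_{-\ell}=\overline{c_\ell}$ and the terms for $\ell$ and $-\ell$ combine to $2\,\mathrm{Re}\,(c_\ell\,\e^{\,\i\,2\pi\ell s/h})$. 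Writing $\Gamma(\beta-\i\,2\pi\ell/h)$ in amplitude--phase form and using $\cos=\sin(\cdot+\pi/2)$, each such pair becomes $\frac{2}{\Gamma(\beta)}\,|\Gamma(\beta-\i\,2\pi\ell/h)|\,\sin(\tfrac{2\pi\ell}{h}(s-s_\ell))$, with the phase shift $s_\ell$ absorbing the argument of the gamma value. This is exactly \rmref{eq5.6}.

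The main thing to justify, rather than to compute, is the unfolding and the validity of the Fourier expansion. The interchange of summation and integration is legitimate because the integrand of \rmref{eq5.5} is positive and has finite integral over $\mathbb{R}$, so Tonelli applies; the resulting real-line integral converges absolutely for every $\ell$ since its integrand decays like $\e^{\,\beta t}$ as $t\to-\infty$ and doubly exponentially as $t\to+\infty$. That the Fourier series represents $\phi$ pointwise, and in fact that \rmref{eq5.6} converges rapidly, follows from the well-known exponential decay $|\Gamma(\beta+\i y)|\sim c\,|y|^{\,\beta-1/2}\,\e^{-\pi|y|/2}$ of the gamma function along vertical lines, which makes the coefficients $c_\ell$ decay like $\e^{-\pi^2\ell/h}$. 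This decay is also the quantitative source of the high accuracy of the trapezoidal approximation \rmref{eq5.4} exploited in the sequel.
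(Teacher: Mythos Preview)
Your proof is correct and is in substance the paper's own argument: the ``unfolding'' you describe is precisely the Poisson summation formula, which the paper invokes by name for the rapidly decreasing function $f(t)=\exp(-\e^{\,t+s}+\beta(t+s))$, and the identification of the resulting integral via Lemma~\ref{lm5.1} and the pairing of conjugate terms proceed exactly as you outline. Your discussion of the justification (Tonelli for the interchange, decay of $\Gamma$ along vertical lines for convergence) makes explicit what the paper leaves to the phrase ``rapidly decreasing.''
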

\begin{proof}
The proof is based on the Poisson summation formula for 
rapidly decreasing functions $f:\mathbb{R}\to\mathbb{R}$, 
more precisely on its rescaled variant
\begin{displaymath}
h\sum_{k=-\infty}^\infty\!f(kh)
\,=\,\sqrt{2\pi}\sum_{\ell=-\infty}^\infty
\fourier{f}\,\left(\frac{2\pi\ell}{h}\right).
\end{displaymath}
The Poisson summation formula is applied to 
the function
\begin{displaymath}
f(t)\,=\,\exp\big(\!-\,\e^{\,t+s}+\beta(t+s)\big)
\end{displaymath}
whose Fourier transform can be calculated with help 
of the representation of the gamma function from 
Lemma~\ref{lm5.1}. This leads to the complex 
representation
\begin{displaymath}    
\phi(s)\,=\,\frac{1}{\Gamma(\beta)}
\sum_{\ell=-\infty}^\infty\, 
\Gamma\bigg(\beta-\,\i\,\frac{2\pi\ell}{h}\bigg)
\exp\bigg(\i\,\frac{2\pi\ell}{h}\;s\bigg)
\end{displaymath}
of the function (\ref{eq5.5}). As
$\Gamma(\beta+\,\i\,\omega)$ is the complex 
conjugate of $\Gamma(\beta-\,\i\,\omega)$, 
the right-hand side is real-valued and can 
be written as above.
\qed
\end{proof}
It will turn out shortly that at least for $\beta=1/2$ 
and $\beta=1$, and not too big values of $h$, the  
term $\ell=1$ completely dominates the series; the 
other terms sum up to a remainder whose maximum norm 
is many orders of magnitude smaller than that of this 
term. A direct consequence of Lemma~\ref{lm5.2} is 
the following error estimate.
\begin{lemma}       \label{lm5.3}
For all positive real exponents $\beta$ and all $r>0$, 
\begin{equation}    \label{eq5.7}
\bigg|\;\frac{1}{r^{\,\beta}}\;-\;\frac{1}{\Gamma(\beta)}\;
h\!\sum_{k=-\infty}^\infty\!\e^{\,\beta kh}
\exp(-\,\e^{\,kh} r)\,\bigg|\;\leq\,
\frac{\varepsilon(\beta,h)}{r^{\,\beta}}
\end{equation}
holds, where the bound $\varepsilon(\beta,h)$ for the 
relative error is given by the expression
\begin{equation}    \label{eq5.8}
\varepsilon(\beta,h)\,=\,
\frac{2}{\Gamma(\beta)}\,\sum_{\ell=1}^\infty\,
\bigg|\,\Gamma\bigg(\beta-\,\i\,\frac{2\pi\ell}{h}\bigg)\bigg|.
\end{equation}
\end{lemma}
Beylkin and Monz\'{o}n~\cite{Beylkin-Monzon} estimate
the error bound $\varepsilon(\beta,h)$ for arbitrary 
positive real exponents $\beta$ and show that it tends 
exponentially to zero as $h$ goes to zero. We are 
solely interested in the exponents $\beta=1/2$ and 
$\beta=1$ and utilize that for real $\omega$
\begin{equation}    \label{eq5.9}
\bigg|\,\Gamma\bigg(\frac12-\,\i\,\omega\bigg)\bigg|^2
\,=\,\frac{\pi}{\cosh(\pi\omega)},
\quad
|\Gamma(1-\,\i\,\omega)|^2
\,=\,\frac{\pi\omega}{\sinh(\pi\omega)}
\end{equation}
holds, which is a direct consequence of Euler's 
reflection formula for the gamma function. The error 
bound $\varepsilon(1/2,h)$ possesses therefore the 
series representation
\begin{equation}    \label{eq5.10}
\varepsilon\bigg(\frac12,\,h\bigg)=\;
2\,\sqrt{2}\;\sum_{\ell=1}^\infty\frac{q^{\,\ell}\;}{(1+q^{\,4\ell})^{1/2}}
\end{equation}
in terms of the variable $q=\e^{-\pi^2/h}$, and the 
error bound $\varepsilon(1,h)$ the expansion
\begin{equation}    \label{eq5.11}
\varepsilon(1,h)\,=\;4\pi\,h^{-1/2}
\sum_{\ell=1}^\infty\frac{\sqrt{\ell}\,q^{\,\ell}\;}{(1-q^{\,4\ell})^{1/2}}.
\end{equation}
We conclude that the maximum norm of the relative error
itself as well as its upper bound $\epsilon(\beta,h)$  
differ for $\beta=1/2$ and $\beta=1$, respectively, from 
the quantities
\begin{equation}    \label{eq5.12}
2\,\sqrt{2}\;\e^{-\pi^2/h},\quad 4\pi\,h^{-1/2}\e^{-\pi^2/h}
\end{equation}
only by factors that tend themselves like $\sim\e^{-\pi^2/h}$ 
to one as $h$ goes to zero. The relative error thus tends  
very rapidly to zero. For both cases, it is already less than 
$10^{-7}$ for $h=1/2$, less than $10^{-15}$ for $h=1/4$, and 
less than $10^{-32}$ for $h=1/8$.


\section{The approximations of the operators}
\label{sec6}

\setcounter{equation}{0}
\setcounter{lemma}{0}
\setcounter{theorem}{0}

The approximations (\ref{eq5.4}) of $1/\sqrt{r}$ and 
$1/r$ lead to the approximations
\begin{equation}    \label{eq6.1}
\frac{1}{r}\,\approx\,\frac{1}{\sqrt{\pi}}\;
h\!\sum_{k=-\infty}^\infty\!\e^{\,kh/2}
\exp(-\,\e^{\,kh} r^{\,2}),
\quad
\frac{1}{r}\,\approx\;
h\!\sum_{k=-\infty}^\infty\!\e^{\,kh}
\exp(-\,\e^{\,kh} r)
\end{equation}
of $1/r$ by sums of Gauss and exponential functions, 
respectively. These form the basis of our approximations 
(\ref{eq4.1}) of the interaction potentials between the 
electrons and the electrons and nuclei and of the inverse 
(\ref{eq3.1}):
\begin{equation}    \label{eq6.2}
(Gf)(\x)\,=\,\Big(\frac{1}{\sqrt{2\pi}}\Big)^{3N}\!
\int \frac{1}{\,|\vomega|^2-\,\lambda}\;
\fourier{f}\,(\vomega)\,\e^{\,\i\,\vomega\,\cdot\,\x}\domega
\end{equation}
of the shifted Laplace operator $-\Delta-\lambda$.
It is approximated replacing its symbol 
\begin{equation}    \label{eq6.3}
\frac{1}{\,|\vomega|^2-\,\lambda}
\,\approx\;
h\!\sum_{k=-\infty}^\infty\!\e^{\,kh}
\exp\big(\!-\,\e^{\,kh}(|\vomega|^2-\,\lambda)\big)
\end{equation}
by the second of the approximations from (\ref{eq6.1}). 
That is, we replace $G$ by the sum 
\begin{equation}    \label{eq6.4}
\GS\;=\sum_{k=-\infty}^\infty\!G_k
\end{equation}
of the operators $G_k:L_2\to H^2$ with symbols
\begin{equation}    \label{eq6.5}
h\;\exp(\e^{\,kh}\lambda+kh)\exp({-\,\e^{\,kh}}|\vomega|^2).
\end{equation}
The pointwise convergence of the series (\ref{eq6.4}) 
to its limit $\GS:L_2\to H^2$ can be shown using 
(\ref{eq5.7}) and the dominated convergence theorem. 
Its norm convergence with respect to an appropriate
scale of norms will follow from the considerations 
in the next section. For square integrable functions $f$ 
\begin{equation}    \label{eq6.6}
\|Gf-\GS f\|_1\,\leq\,\varepsilon(1,h)\|Gf\|_1
\end{equation}
holds, as follows from Lemma~\ref{lm5.3}.
The interaction potential 
\begin{equation}    \label{eq6.7}
V(\x)\;=\;-\;\sumi\sum_{\nu=1}^K\frac{Z_\nu}{|\x_i-\a_\nu|}
\,+\,\frac12 \sumij\frac{1}{|\x_i-\x_j|}
\vspace{-0.5\belowdisplayskip}
\end{equation}
is treated correspondingly replacing it by the sum
\begin{equation}    \label{eq6.8}
\VS\;=\sum_{k=-\infty}^\infty\!V_k
\vspace{-0.5\belowdisplayskip}
\end{equation}
of the parts given by the expression
\begin{equation}    \label{eq6.9}
V_k(\x)\;=\;-\;\sumi\sum_{\nu=1}^K\,Z_\nu\,\phi_k(\x_i-\a_\nu)
\,+\,\frac12 \sumij\phi_k(\x_i-\x_j),
\vspace{-0.5\belowdisplayskip}
\end{equation}
where the $\phi_k:\mathbb{R}^3\to\mathbb{R}$ are 
the Gauss functions 
\begin{equation}    \label{eq6.10}
\phi_k(\x)\,=\,
\frac{1}{\sqrt{\pi}}\;h\;\e^{\,kh/2}\exp(-\,\e^{\,kh}\,|\x|^2).
\end{equation}
The approximation error for the single interaction terms 
can again be estimated with the help of the estimate 
(\ref{eq5.7}) from Lemma~\ref{lm5.3}. 
The three-dimensional Hardy inequality (\ref{eq2.4}) 
and some elementary calculations using the triangle and 
the Cauchy-Schwarz inequality lead to the error estimate
\begin{equation}    \label{eq6.11}
\|Vu-\VS u\|_0\,\leq\,
\theta(N,Z)\,\varepsilon(1/2,h)|\,u\,|_1
\end{equation}
for functions $u\in H^1$, where the prefactor (\ref{eq3.11}):
\begin{equation}    \label{eq6.12}
\theta(N,Z)\,=\,(2\,Z+N-1)\,N^{\,1/2}
\end{equation}
is that from Lemma~\ref{lm2.1} and $Z$ is the total 
charge of the nuclei.
We conclude that already for a moderate choice of the 
distance $h$ between the quadrature points in (\ref{eq5.4})
and (\ref{eq6.1}), respectively, we get approximations of 
extremely high accuracy of the inverse $G$ of the shifted 
Laplace operator and of the multiplication operator $V$. 
The choice of the gridsize $h$ fixes the substitute problem 
(\ref{eq4.8}) that replaces the original equation 
(\ref{eq3.8}) and (\ref{eq4.7}), respectively, and whose 
solution we want to approximate in what follows instead 
of the given eigenfunction.

The crucial point is that the single parts of which the
approximate operators are composed map Gauss functions
to Gauss functions, pure Gauss functions to pure Gauss
functions and Gauss-Hermite functions to Gauss-Hermite
functions with a polynomial part of same degree.
Let us discuss at first the case of pure Gauss function,
without polynomial part. The multiplication of such a 
Gauss function with one of the parts $V_k$ yields a 
finite linear combination of such Gauss functions.  
The reason is that the product of two such Gauss 
functions
\begin{equation}    \label{eq6.13}    
\exp\left(-\frac12\,(\x-\a_1)\cdot\Q_1(\x-\a_1)\right),
\quad
\exp\left(-\frac12\,(\x-\a_2)\cdot\Q_2(\x-\a_2)\right)
\end{equation}
is a scalar multiple of the Gauss function
\begin{equation}    \label{eq6.14}    
\exp\left(-\frac12\,(\x-\a)\cdot\Q(\x-\a)\right)
\end{equation}
with matrix and shift 
\begin{equation}    \label{eq6.15}
\Q\,=\,\Q_1+\Q_2, \quad
\a\,=\,(\Q_1+\Q_2)^{-1}(\Q_1\a_1+\Q_2\a_2).
\end{equation}
Or consider the application of one of the operators 
$G_k$ or of the operator (\ref{eq3.7}) to a Gauss 
function (\ref{eq6.14}). The Fourier transform of 
this Gauss function is 
\begin{equation}    \label{eq6.16}    
\frac{1}{\sqrt{\det\Q}}\;\,\e^{-\i\,\a\,\cdot\,\vomega}\,
\exp\left(-\frac12\,\vomega\cdot\Q^{-1}\vomega\right).
\end{equation}
Multiplication with an isotropic Gauss function
\begin{equation}    \label{eq6.17}
\exp\left(-\frac12\,\alpha \,|\vomega|^2\right),
\quad \alpha\,>\,0,
\end{equation}
and application of the inverse Fourier transform 
lead to the Gauss function
\begin{equation}    \label{eq6.18}    
\frac{1}{\sqrt{\det(\I+\alpha\Q)}}\;
\exp\left(-\frac12\,(\x-\a)\cdot(\I+\alpha\Q)^{-1}\Q(\x-\a)\right).
\end{equation}
If the matrices under consideration are of the form 
$\Q=\Q\,'\otimes\,\I_{\,3}$, with symmetric-positive 
definite matrices $\Q\,'$ of dimension $N$ and the 
$(3\times 3)$-identity matrix $\I_{\,3}$, none of the 
operations leads out of this class, so that all
computations can be reduced to computations in the
$N$-dimensional space.

The argumentation for Gauss-Hermite functions is similar.
This is obvious for the multiplication with one of the
parts $V_k$ because this does not affect the polynomial
factor. The rest follows from the fact that there is
a direct correspondence between the multiplication of a 
function with a polynomial and the derivatives of its 
Fourier transform. For all multi-indices $\valpha$ and 
all rapidly decreasing functions $v$,
\begin{equation}    \label{eq6.19}
\mathcal{F}(\x^\valpha v)\,=\,
\i^{\,|\valpha|}\mathrm{D}^\valpha\mathcal{F}v,
\quad
\mathcal{F}^{-1}(\vomega^\valpha v)\,=\,
(-\i)^{|\valpha|}\mathrm{D}^\valpha\mathcal{F}^{-1}v.
\end{equation}
If $v$ is a Gauss function (now without polynomial part)
centered around the origin, the derivatives 
$\mathrm{D}^\valpha\mathcal{F}v$ and 
$\mathrm{D}^\valpha\mathcal{F}^{-1}v$ of the Fourier 
transform and of the inverse Fourier transform of $v$ are 
products of the Gauss functions $\mathcal{F}v$ and 
$\mathcal{F}^{-1}v$ with polynomials of degree $|\valpha|$,
so that one remains in the given class. A shift of the
center is easily included and does not affect this property.
The hope is therefore that the solution of our substitute 
equation, the function 
\begin{equation}    \label{eq6.20}
\uS\;=\;\sum_{\nu=0}^\infty (-1)^\nu \TS^\nu\!f,
\end{equation}
can be well approximated by linear combinations of Gauss 
functions as long as this holds for the right hand side
$f$ of this equation.


\section{Estimates of the norms of the single components}
\label{sec7}

\setcounter{equation}{0}
\setcounter{lemma}{0}
\setcounter{theorem}{0}

To obtain approximations of the solution (\ref{eq4.12}), 
(\ref{eq6.20}) of the substitute equation (\ref{eq4.8}) by 
finite linear combinations of Gauss functions, we need to 
truncate both the series on the right hand side of 
(\ref{eq6.20}) itself and the series of Gauss functions 
representing the single terms $\TS^\nu\!f$. For this purpose 
we utilize estimates in fractional order Sobolev spaces 
$H^\vartheta$ for the norms of the single parts of which the 
approximate operators (\ref{eq6.4}) and (\ref{eq6.8}) are 
composed. We will show in this section that the norms of 
these single parts tend exponentially to zero as $k$ goes 
to plus or minus infinity.

The Sobolev space $H^\vartheta$, $\vartheta$ an arbitrary 
real number, is the completion of the space of the 
real-valued, rapidly decreasing functions, or even the 
infinitely differentiable functions with compact support, 
under the norm given by the expression
\begin{equation}    \label{eq7.1}
\|u\|_\vartheta^2\,=\,
\int\big(1+|\vomega|^2\big){}^\vartheta\,|\fourier{u}(\vomega)|^2\domega.
\end{equation}
For $\vartheta=0$, this is the $L_2$-norm and the corresponding 
Sobolev space $H^\vartheta$ is $L_2$, and for $\vartheta=1$, 
the norm coincides with the standard norm on $H^1$. If $\vartheta$ 
is an integer greater than zero, $H^\vartheta$ consists of the 
$\vartheta$-times weakly differentiable functions with weak 
derivatives in $L_2$. For values $\vartheta>0$, we use also the 
seminorm on $H^\vartheta$ given by
\begin{equation}    \label{eq7.2}
|\,u\,|_\vartheta^2\,=\,
\int|\vomega|^{2\vartheta}\,|\fourier{u}(\vomega)|^2\domega.
\end{equation}
For $\vartheta=1$, this seminorm is again the usual 
seminorm on $H^1$.

We begin with the parts of which the interaction potentials
are composed. Starting point is the estimate from the next 
lemma for functions $u:\mathbb{R}^3\to\mathbb{R}$ that 
is based on the three-dimensional Hardy-Rellich inequality. 
For $u\in H^\vartheta$, $0<\vartheta<3/2$,
\begin{equation}    \label{eq7.3}     
\int\!\frac{1}{\,|\x|^{2\vartheta}}\;|u(\x)|^2\dx \;\leq\;
\frac{4^\vartheta}{\min(1,(3-2\vartheta)^2)}\,
\int |\vomega|^{2\vartheta}\,|\fourier{u}(\vomega)|^2\domega.
\end{equation}
For $\vartheta=1$, this is the classical Hardy inequality 
(\ref{eq2.4}). For $\vartheta<1$, the inequality can be 
derived by interpolation from the classical Hardy inequality. 
For \mbox{$1<\vartheta<3/2$}, the inequality is at first 
reduced, similarly as in the proof from \cite{Ys_2010} of 
the classical Hardy inequality, to an estimate of the given 
weighted norm of $u$ by a weighted norm of $\nabla u$ that 
is then further estimated using the inequality for the 
already known case $\vartheta<1$. The optimal constant can 
be calculated with much more effort expanding the functions 
under consideration into products of radial parts and 
spherical harmonics; see \cite{Yafaev}. It behaves like 
$\sim 1/(3-2\vartheta)^2$ when~$\vartheta$ approaches the 
limit value $3/2$, a behavior that reflects the estimate
(\ref{eq7.3}) properly. 
\begin{lemma}       \label{lm7.1}
For all indices $0<\vartheta<1/2$, all nonnegative integers $k$, 
and all rapidly decreasing functions $u:\mathbb{R}^3\to\mathbb{R}$,
the estimate
\begin{equation}    \label{eq7.4}
\|\phi_k u\|_0\,\leq\,\kappa(\vartheta)\,\frac{\vartheta h}{2}\,
\exp\left(-\,\frac{\vartheta h}{2}\,|\,k\,|\,\right)
|\,u\,|_{1+\vartheta}
\end{equation}
holds, where the $\phi_k$ are given by \rmref{eq6.10} and
the constant $\kappa(\vartheta)$ is defined by 
\begin{equation}    \label{eq7.5}
\kappa(\vartheta)\,=\,
\frac{1}{\sqrt{\pi}}\,
\left(\frac{2+2\vartheta}{\e}\right)^{\!(1+\vartheta)/2}\!\!
\frac{2}{\vartheta\,(1-2\vartheta)}.
\end{equation}
For negative integers $k$, the following estimate holds:
\begin{equation}    \label{eq7.6}                
\|\phi_k u\|_0\,\leq\,\kappa(\vartheta)\,\frac{\vartheta h}{2}\,
\exp\left(-\,\frac{\vartheta h}{2}\,|\,k\,|\,\right)
|\,u\,|_{1-\vartheta}.
\end{equation}
\end{lemma}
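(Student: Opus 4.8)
The plan is to estimate the square $\|\phi_k u\|_0^2=\int\phi_k(\x)^2\,u(\x)^2\dx$ by extracting a radial weight $|\x|^{-2s}$ and invoking the Hardy--Rellich inequality \rmref{eq7.3}, with the exponent chosen as $s=1+\vartheta$ when $k\ge 0$ and as $s=1-\vartheta$ when $k<0$. This choice is dictated by two requirements: both indices must lie in the admissible range $(0,3/2)$ of \rmref{eq7.3}, which holds precisely because $\vartheta<1/2$, and the resulting $k$-dependence must decay as $k\to\pm\infty$. The corresponding seminorm indices $1\pm\vartheta$ are exactly those appearing on the right-hand sides of \rmref{eq7.4} and \rmref{eq7.6}.

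First I would write $\phi_k(\x)^2=\psi_k(\x)/|\x|^{2s}$ with $\psi_k(\x)=|\x|^{2s}\phi_k(\x)^2$ and bound
\begin{equation*}
\|\phi_k u\|_0^2\,\le\,\|\psi_k\|_\infty\int\!\frac{1}{|\x|^{2s}}\,u(\x)^2\dx\,\le\,\|\psi_k\|_\infty\,C(s)\,|\,u\,|_s^2,
\end{equation*}
where $C(s)$ denotes the constant from \rmref{eq7.3}. Since $\psi_k$ depends only on $t=|\x|^2$, namely $\psi_k=(h^2/\pi)\,\sigma\,t^{s}\,\e^{-2\sigma t}$ with $\sigma=\e^{\,kh}$, its maximum is attained at $t=s/(2\sigma)$ and equals $\|\psi_k\|_\infty=(h^2/\pi)(s/2\e)^{s}\,\sigma^{1-s}$. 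For $s=1+\vartheta$ one has $\sigma^{1-s}=\e^{-\vartheta kh}$, and for $s=1-\vartheta$ one has $\sigma^{1-s}=\e^{\,\vartheta kh}=\e^{-\vartheta|k|h}$; in both cases the factor is $\e^{-\vartheta|k|h}$, which after taking the square root produces the claimed exponential $\exp(-\tfrac{\vartheta h}{2}|k|)$.

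The remaining task is to track the constants, and here the key point is the behaviour of the Hardy--Rellich constant $C(s)=4^{s}/\min(1,(3-2s)^2)$. For $s=1+\vartheta>1$ one has $3-2s=1-2\vartheta\in(0,1)$, so $C(1+\vartheta)=4^{1+\vartheta}/(1-2\vartheta)^2$; for $s=1-\vartheta<1$ one has $3-2s=1+2\vartheta>1$, so the minimum equals $1$ and $C(1-\vartheta)=4^{1-\vartheta}$. Collecting $\|\psi_k\|_\infty^{1/2}C(s)^{1/2}$ and using the identity $(s/2\e)^{s/2}\,2^{s}=(2s/\e)^{s/2}$, I expect the nonnegative case to reproduce the prefactor in \rmref{eq7.4} exactly, since $(2(1+\vartheta)/\e)^{(1+\vartheta)/2}\,(1-2\vartheta)^{-1}$ is, up to the common factor $h/\sqrt\pi$, precisely $\kappa(\vartheta)\,\vartheta h/2$ with $\kappa(\vartheta)$ as in \rmref{eq7.5}.

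For negative $k$ the same computation yields the constant $(h/\sqrt\pi)\,((2-2\vartheta)/\e)^{(1-\vartheta)/2}$, and the final step is to observe that this is dominated by $\kappa(\vartheta)\,\vartheta h/2$ (equality holding in the limit $\vartheta\to 0$), so that \rmref{eq7.6} holds with the same uniform prefactor. The main obstacle is thus not conceptual but the careful bookkeeping that makes the constants in the nonnegative case collapse exactly onto $\kappa(\vartheta)$; the only genuinely delicate structural point is the case split in the Hardy--Rellich constant, which is what forces the two different seminorm indices $1\pm\vartheta$ and the separation between nonnegative and negative $k$ in the statement.
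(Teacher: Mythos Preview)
Your proposal is correct and follows essentially the same route as the paper: factor $\phi_k^2$ as a bounded radial function times $|\x|^{-2(1\pm\vartheta)}$, maximize the bounded part (the paper writes it as $(e^{kh}|\x|^2)^{1\pm\vartheta}\exp(-2e^{kh}|\x|^2)$, maximal at $(1\pm\vartheta)/2$), apply the Hardy--Rellich inequality \rmref{eq7.3}, and then verify the elementary inequality $((2-2\vartheta)/\e)^{1-\vartheta}\le((2+2\vartheta)/\e)^{1+\vartheta}/(1-2\vartheta)^2$ to subsume the negative-$k$ constant under $\kappa(\vartheta)$. The constant bookkeeping you outline matches the paper's exactly.
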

\begin{proof}
The square of the $L_2$-norm to be estimated can be written as
\begin{displaymath}
\|\phi_k u\|_0^2\;=\;
\frac{h^2}{\pi\,}\;\e^{-\vartheta kh}
\int\big(\e^{\,kh}\,|\x|^2\big){}^{1+\vartheta}
\exp\big(\!-2\,\e^{\,kh}\,|\x|^2\big)\,
\frac{1}{\,|\x|^{2\,(1+\vartheta)}}\;|u(\x)|^2\dx.
\end{displaymath}
Since the expression $t^{1+\vartheta}\e^{-2t}$, $t\geq 0$,
attains its maximum at $t=(1+\vartheta)/2$,
\begin{displaymath}
\|\phi_k u\|_0^2\;\leq\;
\frac{h^2}{\pi\,}\,
\left(\frac{1+\vartheta}{2\,\e}\right)^{\!1+\vartheta}
\!\e^{-\vartheta kh}
\int\frac{1}{\,|\x|^{2\,(1+\vartheta)}}\;|u(\x)|^2\dx
\end{displaymath}
follows. The Hardy inequality (\ref{eq7.3}) yields
\begin{displaymath}
\|\phi_k u\|_0^2\;\leq\;
\frac{h^2}{\pi\,}\,
\left(\frac{1+\vartheta}{2\,\e}\right)^{\!1+\vartheta}\!\!
\frac{4^{1+\vartheta}}{(1-2\vartheta)^2}\;\,
\e^{-\vartheta kh}\,|\,u\,|_{1+\vartheta}^2.
\end{displaymath}
For $k\geq 0$, this is the estimate (\ref{eq7.4}). 
For $k<0$, one starts from the representation
\begin{displaymath}
\|\phi_k u\|_0^2\;=\;
\frac{h^2}{\pi\,}\;\e^{\,\vartheta kh}
\int\big(\e^{\,kh}\,|\x|^2\big){}^{1-\vartheta}
\exp\big(\!-2\,\e^{\,kh}\,|\x|^2\big)\,
\frac{1}{\,|\x|^{2\,(1-\vartheta)}}\;|u(\x)|^2\dx
\end{displaymath}
and obtains correspondingly the estimate 
\begin{displaymath}
\|\phi_k u\|_0^2\;\leq\;
\frac{h^2}{\pi\,}\,
\left(\frac{1-\vartheta}{2\,\e}\right)^{\!1-\vartheta}
\!\!4^{1-\vartheta}\,
\e^{\,\vartheta kh}\,|\,u\,|_{1-\vartheta}^2.
\end{displaymath}
Since for all $\vartheta$ in the interval under consideration
\begin{displaymath}
\left(\frac{2-2\vartheta}{\e}\right)^{\!1-\vartheta}
\!\leq\;
\left(\frac{2+2\vartheta}{\e}\right)^{\!1+\vartheta}
\!\!\frac{1}{(1-2\vartheta)^2}
\end{displaymath}
holds, this proves the estimate (\ref{eq7.6}) for the 
case of negative integers $k$.
\qed
\end{proof}
Next we transfer these estimates to the multi-particle 
case and consider functions
\begin{equation}    \label{eq7.7}
u:(\mathbb{R}^3)^N\to\;\mathbb{R}:
(\x_1,\ldots,\x_N)\;\to\;u(\x_1,\ldots,\x_N).
\end{equation}
\begin{lemma}       \label{lm7.2}
For $0<\vartheta<1/2$, for all nonnegative integers $k$, for
all indices $i\neq j$, and for all rapidly decreasing 
functions $u:\mathbb{R}^{3N}\to\mathbb{R}$,
\begin{equation}    \label{eq7.8}
\int|\phi_k(\x_i-\x_j)\,u(\x)|^2\dx 
\;\leq\; K^{\,2}
\int|\vomega_i|^{2\,(1+\vartheta)}|\fourier{u}(\vomega)|^2\domega,
\end{equation}
where $K$ is here an abbreviation for the expression
\begin{equation}    \label{eq7.9}
K\,=\,\kappa(\vartheta)\,\frac{\vartheta h}{2}\,
\exp\left(-\,\frac{\vartheta h}{2}\,|\,k\,|\,\right).
\end{equation}
For negative integers $k$, the following estimate holds:
\begin{equation}    \label{eq7.10}
\int|\phi_k(\x_i-\x_j)\,u(\x)|^2\dx 
\;\leq\; K^{\,2}
\int|\vomega_i|^{2\,(1-\vartheta)}|\fourier{u}(\vomega)|^2\domega.
\end{equation}
\end{lemma}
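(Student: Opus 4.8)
The plan is to reduce the multi-particle estimate fibrewise to the single-particle bound of Lemma~\ref{lm7.1}, the only genuinely new ingredient being the bookkeeping of the Fourier variables. First I would freeze all variables except $\x_i$ and regard $g(\x_i)=u(\x_1,\dots,\x_N)$ as a rapidly decreasing function on $\mathbb{R}^3$. Since $\phi_k$ depends only on the difference $\x_i-\x_j$, the substitution $\y=\x_i-\x_j$ turns the product $\phi_k(\x_i-\x_j)\,g(\x_i)$ into $\phi_k(\y)\,\widetilde g(\y)$, where $\widetilde g(\y)=g(\y+\x_j)$ is again rapidly decreasing, so that Lemma~\ref{lm7.1} applies to it directly.

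The second step is to observe that the shift by $\x_j$ is invisible to the seminorm on the right-hand side of Lemma~\ref{lm7.1}. Indeed $\fourier{\widetilde g}(\veta)=\e^{\,\i\,\veta\cdot\x_j}\fourier{g}(\veta)$, whence $|\fourier{\widetilde g}(\veta)|=|\fourier{g}(\veta)|$, and the estimate \rmref{eq7.4} for $k\geq 0$ yields
\begin{displaymath}
\int_{\mathbb{R}^3}|\phi_k(\x_i-\x_j)\,g(\x_i)|^2\,\diff{\x_i}
\,\leq\,K^{\,2}\int_{\mathbb{R}^3}|\veta|^{2\,(1+\vartheta)}\,
|\fourier{g}(\veta)|^2\,\diff{\veta},
\end{displaymath}
where $\fourier{g}$ is the Fourier transform of $u$ in the single variable $\x_i$, the frequency $\veta$ its dual variable, and $K$ the constant \rmref{eq7.9}. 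The third step is then to integrate this inequality over the remaining variables $\x_l$, $l\neq i$, and to invoke Plancherel's theorem in precisely those variables in order to convert the partial Fourier transform into the full one. Because the weight $|\veta|^{2\,(1+\vartheta)}$ does not involve the variables being transformed, it passes through untouched, and the dual variable $\veta$ of $\x_i$ is exactly $\vomega_i$; this produces \rmref{eq7.8}. For negative integers $k$ the identical argument is run with the second bound \rmref{eq7.6}, whose right-hand side carries the exponent $1-\vartheta$ in place of $1+\vartheta$, and delivers \rmref{eq7.10}.

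The calculations are routine once this reduction is in place. The only points requiring care are the interchange of the fibrewise estimate with the integration over the spectator variables and the subsequent Plancherel step; since $u$ is rapidly decreasing, all integrands are nonnegative and integrable, so Tonelli's theorem legitimises the interchange and the mixed seminorm on the right-hand side is finite. The step I would watch most closely is the identification of the conjugate frequency: because we freeze $\x_j$ and keep $\x_i$ as the active coordinate, the shift introduces only the unimodular factor $\e^{\,\i\,\veta\cdot\x_j}$ and the relevant frequency is the dual of $\x_i$, namely $\vomega_i$. Freezing $\x_i$ instead would have produced $\vomega_j$ by the same reasoning, which is precisely why the lemma singles out $\vomega_i$ on the right-hand side.
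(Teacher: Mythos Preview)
Your argument is correct and follows essentially the same route as the paper: freeze the spectator variables, apply Lemma~\ref{lm7.1} to the resulting three-dimensional slice using shift-invariance of the Fourier seminorm, then integrate over the remaining variables and pass to the full Fourier transform via Plancherel (the paper phrases the shift-invariance step more tersely and cites Fubini where you cite Tonelli, but the content is identical).
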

\begin{proof}
We split the vectors $\x=(\x_i,\x')$ in $\mathbb{R}^{3N}$ into 
the component $\x_i$ in $\mathbb{R}^3$ and the remaining part 
$\x'$. As, for any given $\x'$, $\x_i\to u(\x_i,\x')$ is a  
rapidly decreasing function from $\mathbb{R}^3$ to $\mathbb{R}$ 
and as the seminorms (\ref{eq7.2}) are shift-invariant, 
by Lemma~\ref{lm7.1} then
\begin{displaymath}
\int|\phi_k(\x_i-\x_j)\,u(\x_i,\x')|^2\,\diff{\x_i}
\;\leq\;K^{\,2}\int
|\vomega_i|^{2\,(1+\vartheta)}|(\mathcal{F}_i u)(\vomega_i,\x')|^2\,
\diff{\vomega_i},
\end{displaymath}
where $\mathcal{F}_i$ denotes the Fourier transform with respect 
to $\x_i$. Correspondingly, let $\mathcal{F}'$ be the Fourier 
transform with respect to the remaining variables $\x'$. 
Integration of this inequality with respect to the remaining 
variables, Fubini's theorem, that
\begin{displaymath}
\int|(\mathcal{F}_i u)(\vomega_i,\x')|^2\,\diff{\x'}
\,=\,
\int|(\mathcal{F}'\!\mathcal{F}_i u)(\vomega_i,\vomega')|^2\,
\diff{\vomega'}
\end{displaymath}
by Plancherel's theorem, and  the observation that 
$\mathcal{F}'\!\mathcal{F}_i$ is the Fourier transform with 
respect to the full set of variables, finally yield the 
estimate (\ref{eq7.8}). The estimate (\ref{eq7.10}) for
the case of negative integers $k$ is proved in the
same way. 
\qed
\end{proof}
Estimates of the same type hold for the terms coming from the 
interaction between the electrons and the nuclei and can be
derived in the same way.
\begin{lemma}       \label{lm7.3}
For $0\!<\!\vartheta\!<\!1/2$, for all nonnegative integers $k$, 
for all indices $i$ and~$\nu$, and for all rapidly decreasing
functions $u:\mathbb{R}^{3N}\to\mathbb{R}$,
\begin{equation}    \label{eq7.11}
\int|\phi_k(\x_i-\a_\nu)\,u(\x)|^2\dx 
\;\leq\; K^{\,2}
\int|\vomega_i|^{2\,(1+\vartheta)}|\fourier{u}(\vomega)|^2\domega,
\end{equation}
where $K$ is again the abbreviation for the expression
\begin{equation}    \label{eq7.12}
K\,=\,\kappa(\vartheta)\,\frac{\vartheta h}{2}\,
\exp\left(-\,\frac{\vartheta h}{2}\,|\,k\,|\,\right).
\end{equation}
For negative integers $k$, the following estimate holds:
\begin{equation}    \label{eq7.13}
\int|\phi_k(\x_i-\a_\nu)\,u(\x)|^2\dx 
\;\leq\; K^{\,2}
\int|\vomega_i|^{2\,(1-\vartheta)}|\fourier{u}(\vomega)|^2\domega.
\end{equation}
\end{lemma}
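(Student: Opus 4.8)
The plan is to follow the proof of Lemma~\ref{lm7.2} step by step, with the movable center $\x_j$ simply replaced by the fixed nuclear position $\a_\nu$. First I would split the variables as $\x=(\x_i,\x')$, where $\x_i\in\mathbb{R}^3$ is the coordinate of the $i$-th electron and $\x'$ collects the remaining $3(N-1)$ coordinates. For every fixed $\x'$, the section $\x_i\to u(\x_i,\x')$ is a rapidly decreasing function on $\mathbb{R}^3$, so the single-particle estimate of Lemma~\ref{lm7.1} applies to it.

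The key point, exactly as in Lemma~\ref{lm7.2}, is that the seminorm (\ref{eq7.2}) on the right of (\ref{eq7.4}) is invariant under translations of the argument, since in Fourier space a shift only multiplies $\fourier{u}$ by a unimodular phase and leaves $|\fourier{u}|^2$ unchanged. Hence centering $\phi_k$ at $\a_\nu$ costs nothing, and applying (\ref{eq7.4}) to the section yields, for nonnegative $k$,
\begin{displaymath}
\int|\phi_k(\x_i-\a_\nu)\,u(\x_i,\x')|^2\,\diff{\x_i}
\;\leq\;K^{\,2}\int|\vomega_i|^{2\,(1+\vartheta)}|(\mathcal{F}_i u)(\vomega_i,\x')|^2\,\diff{\vomega_i},
\end{displaymath}
where $\mathcal{F}_i$ denotes the Fourier transform with respect to $\x_i$ and $K$ is the constant (\ref{eq7.12}).

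Integrating this inequality over $\x'$ and invoking Fubini's theorem, Plancherel's theorem in the variables $\x'$, and the observation that the composition $\mathcal{F}'\!\mathcal{F}_i$ is the full Fourier transform then yields the claimed estimate (\ref{eq7.11}). The estimate (\ref{eq7.13}) for negative integers $k$ follows in the same way, now starting from (\ref{eq7.6}) of Lemma~\ref{lm7.1}, which carries the exponent $1-\vartheta$ in place of $1+\vartheta$. I expect no genuine obstacle here: the entire content is the reduction to the three-dimensional single-particle situation already settled in Lemma~\ref{lm7.1}, and the only point worth stating explicitly is that shift-invariance of the seminorm absorbs the constant displacement $\a_\nu$, just as it absorbed the displacement by $\x_j$ in Lemma~\ref{lm7.2}.
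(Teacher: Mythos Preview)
Your proposal is correct and matches the paper's own approach exactly: the paper does not spell out a separate proof but simply remarks that the estimates ``can be derived in the same way'' as Lemma~\ref{lm7.2}, which is precisely the reduction to Lemma~\ref{lm7.1} via sectioning, shift-invariance of the seminorm, Fubini, and Plancherel that you describe.
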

We finally combine the estimates from Lemma~\ref{lm7.2} 
and Lemma~\ref{lm7.3} for its parts to estimates for
the functions (\ref{eq6.9}), interpreted as a 
multiplication operators mapping the functions in
the space $H^{1+\vartheta}$ to functions in $L_2$.
\begin{lemma}       \label{lm7.4}
For all indices $0\!<\!\vartheta\!<\!1/2$, for all 
negative integers $k$, and for all rapidly decreasing 
functions $u:\mathbb{R}^{3N}\to\mathbb{R}$,
\begin{equation}    \label{eq7.14}
\|V_ku\|_0\;\leq\;
\frac{(2\,Z+N-1)\,N^{\,(1+\vartheta)/2}}{2}\;
\kappa(\vartheta)\,\frac{\vartheta h}{2}\,
\exp\left(-\,\frac{\vartheta h}{2}\,|\,k\,|\,\right)
|\,u\,|_{1-\vartheta}.
\end{equation}
For nonnegative integers $k$, the following estimate holds:
\begin{equation}    \label{eq7.15}
\|V_ku\|_0\;\leq\;
\frac{(2\,Z+N-1)\,N^{\,1/2}}{2}\;
\kappa(\vartheta)\,\frac{\vartheta h}{2}\,
\exp\left(-\,\frac{\vartheta h}{2}\,|\,k\,|\,\right)
|\,u\,|_{1+\vartheta}.
\end{equation}
As before, $Z=\sum_\nu Z_\nu$ is here again the total 
charge of the nuclei.
\end{lemma}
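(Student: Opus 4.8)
The plan is to start from the explicit form \rmref{eq6.9} of $V_k$, apply the triangle inequality to reduce to the individual interaction terms, and then invoke the single-term estimates already established in Lemma~\ref{lm7.2} and Lemma~\ref{lm7.3}. Writing $K$ for the abbreviation \rmref{eq7.9}, \rmref{eq7.12} and introducing, for each electron index $i$, the quantity
\begin{displaymath}
I_i\,=\,\int|\vomega_i|^{2\,(1\pm\vartheta)}\,|\fourier{u}(\vomega)|^2\domega
\end{displaymath}
with exponent $1+\vartheta$ in the case $k\geq 0$ and $1-\vartheta$ in the case $k<0$, these lemmas yield $\|\phi_k(\x_i-\x_j)u\|_0\leq K\,I_i^{1/2}$ and $\|\phi_k(\x_i-\a_\nu)u\|_0\leq K\,I_i^{1/2}$, uniformly in $j$ and~$\nu$, since $I_i$ depends on neither.

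First I would treat the two groups of terms separately. For the nucleus-electron part, summing over $\nu$ produces the factor $\sum_\nu Z_\nu=Z$, so that $\sum_{i,\nu}Z_\nu\|\phi_k(\x_i-\a_\nu)u\|_0\leq K\,Z\sum_i I_i^{1/2}$. For the electron-electron part, every fixed $i$ occurs in exactly $N-1$ of the ordered pairs, whence $\frac12\sum_{i\neq j}\|\phi_k(\x_i-\x_j)u\|_0\leq\tfrac{K(N-1)}{2}\sum_i I_i^{1/2}$. Adding the two and collecting the common prefactor gives the bound $K\bigl(Z+\tfrac{N-1}{2}\bigr)\sum_i I_i^{1/2}=K\,\tfrac{2Z+N-1}{2}\sum_i I_i^{1/2}$, so that everything reduces to estimating $\sum_i I_i^{1/2}$ by the appropriate seminorm of $u$. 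Here I would apply the Cauchy-Schwarz inequality in the index~$i$, obtaining $\sum_i I_i^{1/2}\leq N^{1/2}\bigl(\sum_i I_i\bigr)^{1/2}$, and combine the integrands to write $\sum_i I_i=\int\bigl(\sum_i|\vomega_i|^{2\,(1\pm\vartheta)}\bigr)|\fourier{u}(\vomega)|^2\domega$.

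The decisive and only nonroutine point is the comparison of $\sum_i|\vomega_i|^{2\,(1\pm\vartheta)}$ with $|\vomega|^{2\,(1\pm\vartheta)}=\bigl(\sum_i|\vomega_i|^2\bigr)^{1\pm\vartheta}$, and it is exactly this step that produces the different powers of $N$ in \rmref{eq7.14} and \rmref{eq7.15}. For nonnegative~$k$ the exponent $1+\vartheta$ exceeds one, so the superadditivity $\sum_i a_i^{\,p}\leq\bigl(\sum_i a_i\bigr)^p$, valid for $p\geq 1$ and $a_i\geq 0$, gives $\sum_i I_i\leq|\,u\,|_{1+\vartheta}^2$ and hence the clean factor $N^{1/2}$ of \rmref{eq7.15}. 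For negative~$k$ the exponent is $1-\vartheta<1$, the inequality reverses, and I would instead invoke Jensen's inequality in the form $\sum_i a_i^{\,p}\leq N^{1-p}\bigl(\sum_i a_i\bigr)^p$ for $0<p<1$; with $p=1-\vartheta$ this contributes the extra factor $N^{\vartheta}$, so that $\sum_i I_i\leq N^{\vartheta}|\,u\,|_{1-\vartheta}^2$ and therefore $\sum_i I_i^{1/2}\leq N^{(1+\vartheta)/2}|\,u\,|_{1-\vartheta}$, precisely the power appearing in \rmref{eq7.14}. Substituting the value \rmref{eq7.9} of $K$ back into the two resulting bounds then reproduces \rmref{eq7.14} and \rmref{eq7.15} verbatim. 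I expect the asymmetric power-sum comparison, rather than the bookkeeping, to be the part requiring genuine care.
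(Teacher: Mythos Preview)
Your proof is correct and follows essentially the same route as the paper's: triangle and Cauchy--Schwarz inequalities reduce the estimate to $\frac{(2Z+N-1)N^{1/2}}{2}\,K\,(\sum_i I_i)^{1/2}$, and then the power-sum comparison $\sum_i|\vomega_i|^{2(1\pm\vartheta)}\leq C_\pm|\vomega|^{2(1\pm\vartheta)}$ with $C_+=1$, $C_-=N^\vartheta$ supplies the final factor. The only cosmetic difference is that the paper phrases the $k<0$ inequality via H\"older on the unit sphere, $\sum_i\eta_i^{2(1-\vartheta)}\leq N^\vartheta$, rather than your equivalent Jensen formulation.
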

\begin{proof}
For negative integers $k$, the estimates (\ref{eq7.10}) and 
(\ref{eq7.13}) for the single parts of~$V_k$ and the triangle 
and the Cauchy-Schwarz inequality at first yield 
\begin{displaymath}  
\|V_ku\|_0\;\leq\;\frac{(2\,Z+N-1)\,N^{\,1/2}}{2}\;
\bigg(K^{\,2}\,\sum_{i=1}^N
\int|\vomega_i|^{2\,(1-\vartheta)}|\fourier{u}(\vomega)|^2\domega
\bigg)^{1/2},
\end{displaymath}
where the abbreviation (\ref{eq7.9}) has again been used.
By H\"older's inequality
\begin{displaymath}
\sum_{i=1}^N \eta_i^{2\,(1-\vartheta)}\leq\;N^\vartheta
\end{displaymath}
for all $\veta$ on the surface of the $N$-dimensional unit 
sphere, from which (\ref{eq7.14}) follows. The case of 
nonnegative integers $k$ is treated analogously, where 
the estimate
\begin{displaymath}
\sum_{i=1}^N \eta_i^{2\,(1+\vartheta)}\leq\;1
\end{displaymath}
for the $\veta$ on the surface of the $N$-dimensional unit 
sphere enters.
\qed 
\end{proof}
Because of $|\,u\,|_{1\pm \vartheta}\leq\|u\|_{1+\vartheta}$,
the two estimates can be combined to the estimate
\begin{equation}    \label{eq7.16}
\|V_ku\|_0\;\leq\;
\frac{(2\,Z+N-1)\,N^{\,(1+\vartheta)/2}}{2}\;
\kappa(\vartheta)\,\frac{\vartheta h}{2}\,
\exp\left(-\,\frac{\vartheta h}{2}\,|\,k\,|\,\right)
\|u\|_{1+\vartheta}
\end{equation}
that holds both for positive and negative integers $k$.

Next we estimate the norms of the operators $G_k$ of which 
the approximate inverse (\ref{eq6.4}) of the shifted 
Laplacian $-\Delta-\lambda$ is composed. We interpret these 
operators as operators from $L_2$ to the spaces 
$H^{\,2-\vartheta}$ for indices $\vartheta$ between $0$ and $2$.
\begin{lemma}       \label{lm7.5}
For all indices $0<\vartheta<2$, for all integers $k$, 
and for all rapidly decreasing functions 
$f:\mathbb{R}^{3N}\to\mathbb{R}$ the following 
estimate holds:
\begin{equation}    \label{eq7.17}
|\,G_k f\,|_{2-\vartheta}\;\leq\;
h\;\left(\frac{2-\vartheta}{2\,\e}\right)^{\!(2-\vartheta)/2}\!\!
\!\exp\left(\e^{\,kh}\lambda+\frac{\vartheta}{2}\,kh\right)\|f\|_0.
\end{equation}
\end{lemma}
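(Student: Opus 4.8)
The plan is to work entirely on the Fourier side, where $G_k$ acts as multiplication by the scalar symbol \rmref{eq6.5}, and to reduce the claimed bound to a pointwise maximization of a single scalar function, exactly in the spirit of the proof of Lemma~\ref{lm7.1}. Writing out the seminorm \rmref{eq7.2} for $u=G_kf$ and inserting the symbol from \rmref{eq6.5} gives
\begin{displaymath}
|\,G_k f\,|_{2-\vartheta}^2
\,=\,h^2\,\exp\big(2\,\e^{\,kh}\lambda+2kh\big)
\int |\vomega|^{2(2-\vartheta)}\,
\exp\big(\!-2\,\e^{\,kh}|\vomega|^2\big)\,
|\fourier{f}(\vomega)|^2\domega,
\end{displaymath}
so that the whole task is to control the weight $|\vomega|^{2(2-\vartheta)}\exp(-2\,\e^{\,kh}|\vomega|^2)$ uniformly in $\vomega$, after which the constant in front and a Plancherel step finish the argument.

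First I would substitute $t=|\vomega|^2$ and $a=\e^{\,kh}$ and bound the resulting scalar expression $t^{\,2-\vartheta}\e^{-2at}$ by its global maximum over $t\geq 0$. A one-line differentiation shows this maximum is attained at $t=(2-\vartheta)/(2a)$, where the value equals $\big((2-\vartheta)/(2a\e)\big)^{2-\vartheta}$. With $a=\e^{\,kh}$ this is exactly $\big((2-\vartheta)/(2\e)\big)^{2-\vartheta}\e^{-(2-\vartheta)kh}$. This is the only genuine computation and the direct analogue of the maximization of $t^{1+\vartheta}\e^{-2t}$ used in Lemma~\ref{lm7.1}. Because it is a maximum over the entire half-line, no restriction on the sign of $k$ is needed, which explains why a single estimate holds for all integers $k$ here, in contrast to the split into positive and negative $k$ forced by the Hardy inequality in the potential estimates.

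Pulling this bound out of the integral, the remaining factor $\int|\fourier{f}(\vomega)|^2\domega$ is just $\|f\|_0^2$ by Plancherel's theorem. It then only remains to collect the exponential prefactors: the constant from the symbol contributes $\exp(2\,\e^{\,kh}\lambda+2kh)$, the maximization contributes the additional factor $\e^{-(2-\vartheta)kh}$, and their product simplifies to $\exp(2\,\e^{\,kh}\lambda+\vartheta kh)$. Taking the square root and combining the algebraic and exponential factors yields precisely \rmref{eq7.17}. I do not expect any real obstacle: equality in the pointwise step is attainable, so the estimate is sharp, and the whole argument is a clean Fourier-multiplier computation that mirrors the earlier lemmas of this section.
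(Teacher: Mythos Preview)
Your proof is correct and essentially the same as the paper's: the paper groups the variable as $t=\e^{\,kh}|\vomega|^2$ up front, so the prefactor $\exp(2\,\e^{\,kh}\lambda+\vartheta kh)$ appears immediately and the maximization is of $t^{2-\vartheta}\e^{-2t}$, whereas you maximize $t^{2-\vartheta}\e^{-2at}$ with $t=|\vomega|^2$, $a=\e^{\,kh}$, and combine the exponential factors afterward. This is a purely cosmetic difference; the computation and the final bound are identical.
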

\begin{proof}
We rewrite the square of the left hand side of
(\ref{eq7.17}) at first in the form
\begin{displaymath}
|\,G_k f\,|_{2-\vartheta}^2=\;h^2\,
\exp\big(2\,\e^{\,kh}\lambda+\vartheta kh\big)
\int\!
\big(\e^{\,kh}|\vomega|^2\big){}^{2-\vartheta}
\exp\big(\!-2\,\e^{\,kh}|\vomega|^2\big)
|\fourier{f}\,(\vomega)|^2\domega.
\end{displaymath}
Using that the expression $t^{2-\vartheta}\e^{-2\,t}$, $t>0$,
attains its maximum at $t=(2-\vartheta)/2$,
\begin{displaymath}
|\,G_k f\,|_{2-\vartheta}^2\,\leq\;\,h^2\,
\left(\frac{2-\vartheta}{2\,\e}\right)^{\!2-\vartheta}\!\!
\exp\big(2\,\e^{\,kh}\lambda+\vartheta kh\big)
\int|\fourier{f}\,(\vomega)|^2\domega
\end{displaymath}
follows, which was the proposition.
\qed
\end{proof}
The conclusion is that, for $0<\vartheta<2$, the norms 
of the operators 
\begin{equation}    \label{eq7.18}
G_k:L_2\to H^{\,2-\vartheta} 
\end{equation}
tend again rapidly to zero as $k$ goes to plus or minus 
infinity, exponentially as $k$ goes to minus infinity, 
and super exponentially as $k$ goes to plus infinity.
Because of
\begin{displaymath}
\exp\left(\e^{\,t}\lambda+\frac{\vartheta}{2}\;t\right)
\,\leq\;\max\left(1\,,\;
\Big(\!-\frac{\vartheta}{\lambda\e}\,\Big)^{\!\vartheta}\,\right)
\exp\left(-\,\frac{\vartheta}{2}\,|\,t\,|\,\right)
\end{displaymath} 
one obtains again, with the correspondingly chosen constant
$\kappa^*(\lambda,\vartheta)$, the in comparison to the
original estimate (\ref{eq7.17}) for later purposes more 
convenient estimate
\begin{equation}    \label{eq7.19}
|\,G_k f\,|_{2-\vartheta}\;\leq\;
\kappa^*(\lambda,\vartheta)\,\frac{\vartheta h}{2}\,
\exp\left(-\,\frac{\vartheta h}{2}\,|\,k\,|\,\right)\|f\|_0
\end{equation}
for $f\in L_2$, 
which, however, severely overestimates the norms 
for positive $k$.   

Central for our argumentation is the splitting of the 
potential term $Vu$ into the smooth part $QVu$, the 
convolution of $Vu$ with the Gaussian kernel (\ref{eq3.5}),
and the complementary part $PVu$. The cut-off operator $P$
filtering out the low-frequency part of $Vu$ 
will make sure that the norms of the combined operators 
remain sufficiently small and do not exceed certain 
bounds. It reads in terms of the Fourier transform
\begin{equation}    \label{eq7.20}
\widehat{Pv\,}(\vomega)\,=\,
\big(1-\e^{-\gamma\,|\vomega|^2}\big)\,\fourier{v}(\vomega).
\end{equation}
The constant $\gamma<1$ determines how strongly low  
frequencies are damped and how much $P$ reduces norms 
in the sense of the following estimate.
\begin{lemma}       \label{lm7.6}
For all indices $0<\vartheta<1/2$, all constants 
$\gamma<1$, and all rapidly decreasing functions
$v:\mathbb{R}^{3N}\to\mathbb{R}$, the following
estimate holds:
\begin{equation}    \label{eq7.21}
\|Pv\|_{1+\vartheta}\,\leq\,
\sqrt{2}\,\gamma^{\,1/2-\vartheta}|\,v\,|_{2-\vartheta}.
\end{equation}
\end{lemma}
\begin{proof}
The square of the norm of $Pv$ can be written as
\begin{displaymath}
\|Pv\|_{1+\vartheta}^2\,=\,\gamma^{\,1-2\vartheta}\!
\int a\big(\vartheta,\gamma,\,\gamma\,|\vomega|^2\big)
|\vomega|^{2\,(2-\vartheta)}|\fourier{v}(\vomega)|^2\domega,
\end{displaymath}
where the function $a(\vartheta,\gamma,\,t)$ is given by
\begin{displaymath}
a(\vartheta,\gamma,\,t)\,=\;t^\vartheta (\gamma+t)^{1+\vartheta}
\left(\frac{1-\,\e^{-t}}{t}\right)^2
\end{displaymath}
and can for the given $\gamma$ and $\vartheta$ be 
roughly estimated as follows:
\begin{displaymath}
a(\vartheta,\gamma,\,t)\,\leq\;
(1+t)^2\left(\frac{1-\,\e^{-t}}{t}\right)^2.
\end{displaymath}
To estimate the bound on the right hand side further, let
\begin{displaymath}
f(t)\,=\,\mu t-(1+t)(1-\e^{-t}), \quad \mu=1+\e^{-1}.
\end{displaymath}
Since $f\,''(t)=-\,(1-t)\,\e^{-t}$, the derivative of $f$ 
takes its minimum $f\,'(1)=0$ at $t=1$. That is, $f(t)$
increases strictly and it is $f(t)\geq f(0)=\,0$ for 
all $t\geq 0$. Therefore
\begin{displaymath}
a(\vartheta,\gamma,\,t)\,\leq\;\mu^2.
\end{displaymath}
Because $\e\geq 5/2$, $\mu^2\leq 2$ and the 
estimate (\ref{eq7.21}) is proven.
\qed
\end{proof}

We could stop our considerations here and could 
as in \cite{Scholz} continue with the given estimate 
(\ref{eq7.16}) for the $V_k$, seen as operators from 
$H^{1+\vartheta}$ to $L_2$, and the estimate 
resulting from (\ref{eq7.19}) and (\ref{eq7.21}) 
for the operators $G_kP=PG_k$ from $L_2$ back 
to $H^{1+\vartheta}$. This would result in an 
analysis on the approximation of the wavefunctions 
in a space $H^{1+\vartheta}$ for some $\vartheta$ 
between $0$ and $1/2$. Since we are, however, 
finally interested in the approximation properties 
in the energy space underlying the electronic 
Schr\"odinger equation, the Sobolev space $H^1$, 
we shift our estimates still downward by the 
chosen~$\vartheta$ using a simple duality and 
interpolation argument.
\begin{lemma}       \label{lm7.7}
The multiplication operators $V_k$ can be uniquely 
extended to bounded linear operators from $H^1$ to 
the dual space $H^{-\vartheta}$, $0<\vartheta<1/2$. 
For all $u\in H^1$,
\begin{equation}    \label{eq7.22}
\|V_ku\|_{-\vartheta}\;\leq\;
\frac{(2\,Z+N-1)\,N^{\,(1+\vartheta)/2}}{2}\;
\kappa(\vartheta)\,\frac{\vartheta h}{2}\,
\exp\left(-\,\frac{\vartheta h}{2}\,|\,k\,|\,\right)
\|u\|_1.
\end{equation}
\end{lemma}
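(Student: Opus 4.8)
The goal is to extend the operators $V_k:H^{1+\vartheta}\to L_2$, for which we already have the norm estimate \rmref{eq7.16}, to operators $V_k:H^1\to H^{-\vartheta}$ with the sharpened estimate \rmref{eq7.22}, where the target regularity is lowered from $1+\vartheta$ to $1$ at the price of moving into a negative-order dual space. The plan is to combine a duality argument with interpolation. Since $V_k$ is multiplication by a real-valued, bounded, rapidly decreasing function, it is formally self-adjoint: for rapidly decreasing $u$ and $v$ one has $(V_k u,v)=(u,V_k v)$. Reading \rmref{eq7.16} as a statement about $V_k:H^{1+\vartheta}\to L_2$ and dualizing, the adjoint acts as a bounded operator $L_2\to H^{-(1+\vartheta)}$ with the very same operator norm. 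By self-adjointness this adjoint is again $V_k$, so we obtain for free the estimate
\begin{displaymath}
\|V_k f\|_{-(1+\vartheta)}\;\leq\;C_k\,\|f\|_0,
\qquad
C_k\,=\,\frac{(2\,Z+N-1)\,N^{\,(1+\vartheta)/2}}{2}\;
\kappa(\vartheta)\,\frac{\vartheta h}{2}\,
\exp\left(-\,\frac{\vartheta h}{2}\,|\,k\,|\,\right),
\end{displaymath}
that is, $V_k:L_2\to H^{-(1+\vartheta)}$ is bounded with the same constant $C_k$ appearing in \rmref{eq7.16}.

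Now I have two endpoint bounds for $V_k$ on the Sobolev scale: it maps $H^{1+\vartheta}$ into $L_2=H^0$ with norm $C_k$, and it maps $H^0$ into $H^{-(1+\vartheta)}$ with the same norm $C_k$. The plan is to interpolate between these two endpoints. The complex interpolation spaces between $H^{s_0}$ and $H^{s_1}$ with parameter $t\in[0,1]$ are the spaces $H^{(1-t)s_0+t s_1}$, and interpolation preserves the operator norm bound. Choosing the interpolation parameter so that the source exponent $(1-t)(1+\vartheta)+t\cdot 0$ equals $1$ fixes $t=\vartheta/(1+\vartheta)$; the corresponding target exponent is $(1-t)\cdot 0+t\cdot(-(1+\vartheta))=-\vartheta$. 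Thus interpolation yields exactly the bounded operator $V_k:H^1\to H^{-\vartheta}$ with norm dominated by $C_k^{1-t}\,C_k^{\,t}=C_k$, which is precisely the claimed estimate \rmref{eq7.22}.

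The main obstacle here is not the interpolation step, which is a black-box application of the standard complex interpolation theorem on the Sobolev scale (the scale defined by \rmref{eq7.1} is exactly the one for which these interpolation identities hold cleanly via the Fourier transform). The genuine care is needed in justifying the duality step: one must confirm that the pairing used to identify $H^{-\vartheta}$ as the dual of $H^\vartheta$ is the $L_2$ inner product extended by continuity, that $V_k$ is indeed symmetric with respect to this pairing because $\phi_k$ is real-valued and smooth, and that the rapidly decreasing functions are dense enough in all the spaces involved for the extension to be unique. Once the symmetry $(V_ku,v)=(u,V_kv)$ is established on the dense class of rapidly decreasing functions, the dual estimate follows by taking the supremum over $v$ in the unit ball of $H^{1+\vartheta}$ and rewriting it as a norm in $H^{-(1+\vartheta)}$. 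I would state the density and symmetry explicitly, invoke \rmref{eq7.16} for the first endpoint, deduce the second endpoint by duality, and then close with one sentence citing complex interpolation to obtain \rmref{eq7.22}.
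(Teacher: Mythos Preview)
Your proposal is correct and follows essentially the same route as the paper: start from \rmref{eq7.16}, use the self-adjointness of multiplication by the real-valued $V_k$ to obtain the dual estimate $\|V_ku\|_{-(1+\vartheta)}\leq C_k\|u\|_0$, and then interpolate between the two endpoints at $s=1$ to land on $H^1\to H^{-\vartheta}$. The only cosmetic differences are that the paper derives the dual estimate by inserting the explicit test function $\fourier{\varphi}(\vomega)=(1+|\vomega|^2)^{-(1+\vartheta)}\fourier{V_ku}(\vomega)$ rather than invoking the adjoint abstractly, and that the paper's appendix carries out the interpolation via the $K$-functional (real interpolation) rather than complex interpolation; on this Fourier-defined Sobolev scale the two methods coincide.
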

\begin{proof}
It suffices to prove the estimate for rapidly decreasing 
functions $u$ as these are dense in the spaces under 
consideration. Because $V_k$ maps rapidly decreasing 
functions to rapidly decreasing functions, this considerably
simplifies the argumentation. We start from the estimate 
(\ref{eq7.16}), here written as
\begin{displaymath}
\|V_ku\|_0\,\leq\, c\,\|u\|_{1+\vartheta}.
\end{displaymath}
It implies that for all rapidly decreasing functions 
$\varphi$
\begin{displaymath}
(V_ku,\varphi)\,=\,(u,V_k\varphi)\,\leq\, 
c\,\|u\|_0\|\varphi\|_{1+\vartheta}.
\end{displaymath}
A rapidly decreasing function $\varphi$ is real-valued 
if and only if $\overline{\fourier{\varphi}(\vomega)}=
\fourier{\varphi}(-\vomega)$ holds. Inserting the thus 
real-valued rapidly decreasing function $\varphi$ with 
Fourier transform
\begin{displaymath}
\fourier{\varphi}(\vomega)\,=\,
\big(1+|\vomega|^2\big){}^{-(1+\vartheta)}\,
\fourier{v}(\vomega),
\quad  v\,=\,V_ku,
\end{displaymath}
this leads by means of Plancherel's theorem to 
the dual estimate
\begin{displaymath}
\|V_ku\|_{-(1+\vartheta)}\,\leq\, c\,\|u\|_0.
\end{displaymath}
The estimate (\ref{eq7.22}) and generally for
$0\leq s\leq 1+\vartheta$ and $t=s-(1+\vartheta)$ 
the estimate
\begin{displaymath}
\|V_ku\|_{\,t}\,\leq\, c\,\|u\|_{\,s}
\end{displaymath}
follow from the original and the dual estimate 
by interpolation within the space of the rapidly 
decreasing functions; details can be found in 
the appendix.
\qed
\end{proof}
For the composed operators $G_kP$ we proceed in the
same way.
\begin{lemma}       \label{lm7.8}
The operators $G_kP$ can be uniquely extended to bounded 
linear operators from the dual space $H^{-\vartheta}$,
$0<\vartheta<1/2$, to $H^1$. For all $f\in H^{-\vartheta}$,
\begin{equation}    \label{eq7.23}
\|G_kPf\|_1\;\leq\;
\sqrt{2}\,\kappa^*(\lambda,\vartheta)\,\frac{\vartheta h}{2}\,
\exp\left(-\,\frac{\vartheta h}{2}\,|\,k\,|\,\right)
\gamma^{\,1/2-\vartheta}\|f\|_{-\vartheta}.
\end{equation}
\end{lemma}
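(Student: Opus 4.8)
The plan is to mirror the proof of Lemma~\ref{lm7.7} verbatim: establish a base mapping property of the Fourier multiplier $G_kP$, dualize it using its self-adjointness, and then interpolate. Throughout it suffices to argue for rapidly decreasing functions, which are dense in all the spaces involved and which $G_kP$ maps again to rapidly decreasing functions, so that every manipulation below is legitimate. All the analytic content is already contained in the estimates \rmref{eq7.19} and \rmref{eq7.21}; the rest is soft.

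First I would set up the base estimate, which reads $G_kP:L_2\to H^{1+\vartheta}$. Since $G_k$ and $P$ are both Fourier multipliers they commute, whence $G_kPf=PG_kf$ as noted in the text. For $0<\vartheta<1/2$, which lies in the admissible ranges of both Lemma~\ref{lm7.5} and Lemma~\ref{lm7.6}, applying \rmref{eq7.21} with $v=G_kf$ and then \rmref{eq7.19} gives
\[
\|G_kPf\|_{1+\vartheta}=\|PG_kf\|_{1+\vartheta}\leq\sqrt{2}\,\gamma^{\,1/2-\vartheta}\,|\,G_kf\,|_{2-\vartheta}\leq c\,\|f\|_0,
\]
where $c$ abbreviates exactly the constant $\sqrt{2}\,\kappa^*(\lambda,\vartheta)\,\frac{\vartheta h}{2}\,\exp(-\,\frac{\vartheta h}{2}\,|\,k\,|\,)\,\gamma^{\,1/2-\vartheta}$ from the right-hand side of \rmref{eq7.23}.

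Next I would dualize. The operator $G_kP$ has the real-valued and even symbol $h\,\exp(\e^{\,kh}\lambda+kh)\exp(-\,\e^{\,kh}|\vomega|^2)(1-\e^{-\gamma|\vomega|^2})$ and is therefore self-adjoint with respect to the $L_2$-inner product, $(G_kPf,\varphi)=(f,G_kP\varphi)$. For rapidly decreasing $f$ and $\varphi$ this yields
\[
(G_kPf,\varphi)=(f,G_kP\varphi)\leq\|f\|_{-(1+\vartheta)}\,\|G_kP\varphi\|_{1+\vartheta}\leq c\,\|f\|_{-(1+\vartheta)}\,\|\varphi\|_0,
\]
and taking the supremum over all $\varphi$ with $\|\varphi\|_0\leq 1$ produces the dual estimate $\|G_kPf\|_0\leq c\,\|f\|_{-(1+\vartheta)}$, i.e.\ $G_kP:H^{-(1+\vartheta)}\to L_2$ is bounded by the same $c$.

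Finally I would interpolate, within the space of rapidly decreasing functions and along the Sobolev scale, between $G_kP:H^0\to H^{1+\vartheta}$ and $G_kP:H^{-(1+\vartheta)}\to H^0$, exactly as in the appendix already invoked for Lemma~\ref{lm7.7}. For $\theta\in[0,1]$ the source space is $H^{-\theta(1+\vartheta)}$ and the target space is $H^{(1-\theta)(1+\vartheta)}$; the choice $\theta=\vartheta/(1+\vartheta)$ sends the target to $H^1$ and the source simultaneously to $H^{-\vartheta}$, while the interpolated operator norm is $c^{1-\theta}c^{\,\theta}=c$, which is precisely \rmref{eq7.23}. The only genuinely delicate step is this last one, since it rests on the interpolation identities for the Sobolev norms and on the fact that the single parameter $\theta=\vartheta/(1+\vartheta)$ pins down both endpoints at once; but this is governed by the same appendix used for Lemma~\ref{lm7.7}, so no new machinery is required.
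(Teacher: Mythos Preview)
Your proof is correct and follows essentially the same route as the paper: the base estimate $\|G_kPf\|_{1+\vartheta}\le c\,\|f\|_0$ from \rmref{eq7.21} and \rmref{eq7.19}, the dual estimate $\|G_kPf\|_0\le c\,\|f\|_{-(1+\vartheta)}$ via self-adjointness, and then interpolation between the two. The only cosmetic differences are that the paper obtains the dual estimate by inserting $\varphi=G_kPf$ directly rather than taking a supremum, and records the interpolated family as $\|G_kPf\|_{s}\le c\,\|f\|_{t}$ for $t=s-(1+\vartheta)$ without singling out the specific parameter value; your explicit choice $\theta=\vartheta/(1+\vartheta)$ is exactly the point $s=1$ of that family.
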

\begin{proof}
We take again advantage of the fact that it suffices 
to prove the estimate for rapidly decreasing functions 
$f$ and that $G_kP$ maps rapidly decreasing functions
to rapidly decreasing functions.  The fact that the 
operators $G_k$ and $P$ commute, the estimate 
(\ref{eq7.21}) for the norm of the cut-off operator 
$P$, and the estimate (\ref{eq7.19}) for the norm of 
the $G_k$ lead at first to the estimate
\begin{displaymath}
\|G_kPf\|_{1+\vartheta}\,\leq\,c\,\|f\|_0,
\end{displaymath}
where $c$ denotes here the constant from (\ref{eq7.23}).
For all rapidly decreasing functions $\varphi$ by 
Plancherel's theorem and the Cauchy-Schwarz inequality 
therefore
\begin{displaymath}
(G_kPf,\varphi)\,=\,(f,G_kP\varphi)\,\leq\,
c\,\|f\|_{-(1+\vartheta)}\|\varphi\|_0.
\end{displaymath}
Inserting the rapidly decreasing function 
$\varphi=G_kPf$, this yields the dual estimate
\begin{displaymath}
\|G_kPf\|_0\,\leq\,c\,\|f\|_{-(1+\vartheta)}.
\end{displaymath}
The estimate (\ref{eq7.23}) and generally for
$0\leq s\leq 1+\vartheta$ and $t=s-(1+\vartheta)$ 
the estimate
\begin{displaymath}
\|G_kPf\|_{\,s}\,\leq\,c\,\|f\|_{\,t}
\end{displaymath}
follow from these two estimates again by 
interpolation.
\qed
\end{proof}
Finally we combine the estimates from the last two 
lemmata to an estimate of the norm of the composed 
operators $G_\ell PV_k$ from the solution space 
$H^1$ back into itself. Introducing for 
abbreviation the prefactor
\begin{equation}    \label{eq7.24}
\alpha\;\,=\;\,\frac{\kappa^*(\lambda,\vartheta)
\kappa(\vartheta)(\vartheta h)^2\,
(2\,Z+N-1)\,N^{\,(1+\vartheta)/2}}{4\,\sqrt{2}}\;\,
\gamma^{\,1/2-\vartheta}
\end{equation}
and moreover the constant
\begin{equation}    \label{eq7.25}
q\;=\;\exp\left(-\,\frac{\vartheta h}{2}\,\right),
\end{equation}
this estimate for the norm of the composed 
operators reads
\begin{equation}    \label{eq7.26}
\|G_\ell PV_k\,u\|_1\,\leq\,\alpha\,q^{\,|k|+|\ell|}\|u\|_1.
\end{equation}
The decisive point is that for an appropriate choice 
of the parameter $\gamma$ fixing the splitting of 
the wavefunctions into the smooth and the singular 
part the constant $\alpha$ can be made arbitrarily 
small and pushed below every bound. The reason is
that the operators $V_k$ map $H^1$ to $H^{-\vartheta}$, 
but the $G_\ell$ conversely $H^{-\vartheta}$ to the 
space $H^{\,2-\vartheta}$ of higher regularity, 
which brings the cut-off operator $P$ into play. For 
ease of presentation, we still relabel the operators 
$G_\ell PV_k$ and denote them as
\begin{equation}    \label{eq7.27}
T_{n,\ell}, \quad 
n\,=\,0,1,2,\ldots,\;\;\;\ell\,=\,1,\ldots,\ell(n).
\end{equation}
The index $n$ is associated with the exponential 
decay of their norms, such that 
\begin{equation}    \label{eq7.28}
\|T_{n,\ell}\,u\|_1\,\leq\,\alpha\,q^{\,n}\|u\|_1
\end{equation}
for $u\in H^1$. The index $\ell$ counts the operator 
products for which this estimate holds; there are 
$\ell(n)=\max(1,4n)$ pairs of integers $k$ and 
$\ell$ for which $|k|+|\ell|=n$.

The aim of this work is to examine how well the 
solution $u$ of the equation
\begin{equation}    \label{eq7.29}
u\;+\,\TS u\;=\,f
\end{equation}
can be approximated by a linear combination of Gauss
functions in terms of the corresponding approximation
properties of the right hand side $f$. The two series 
\begin{equation}    \label{eq7.30}
\VS\;=\sum_{k=-\infty}^\infty\!V_k, \quad
\GS P\;=\sum_{\ell=-\infty}^\infty\!G_\ell P
\end{equation}
of operators from $H^1$ to $H^{-\vartheta}$ and 
from $H^{-\vartheta}$ back to $H^1$ converge 
absolutely because of the exponential decay of the 
norms of the operators and the completeness of the 
corresponding spaces of linear operators. Their 
product
\begin{equation}    \label{eq7.31}
\TS\,=\;\bigg(\sum_{\ell=-\infty}^\infty\!G_\ell P\bigg)\!
\bigg(\sum_{k=-\infty}^\infty\!V_k\bigg)
\end{equation}
can thus be written as Cauchy product, that is,
in terms of the operators (\ref{eq7.27}) as 
\begin{equation}    \label{eq7.32}
\TS\,=\,\sum_{n=0}^\infty\sum_{\ell=1}^{\ell(n)}T_{n,\ell}.
\end{equation}
As follows from the summation formula
\begin{displaymath}    
\sum_{n=0}^\infty \ell(n)\,q^{\,n}\,=\;
1\,+\,4\sum_{n=1}^\infty n\,q^{\,n}\,=\;
\left(\frac{1+q}{1-q}\,\right)^2
\end{displaymath}
and the estimate (\ref{eq7.28}) for the norms of the
operators $T_{n,\ell}$,
\begin{equation}    \label{eq7.33}
\sum_{n=0}^\infty\sum_{\ell=1}^{\ell(n)}\|T_{n,\ell}\|_1
\;\leq\;\alpha\left(\frac{1+q}{1-q}\,\right)^2.
\end{equation}
The operator $\TS$ from $H^1$ into itself is therefore 
a contraction for sufficiently small $\alpha$ and 
$\gamma$, respectively. The equation (\ref{eq7.29}) 
possesses then for given $f\in H^1$ a unique solution 
$u\in H^1$. Note, however, that the condition on 
$\gamma$ resulting from (\ref{eq7.33}) is more 
restrictive than the conditions discussed in 
Sect.~\ref{sec3} and Sect.~\ref{sec4}.


\section{The approximate solution of the substitute equation}
\label{sec8}

\setcounter{equation}{0}
\setcounter{lemma}{0}
\setcounter{theorem}{0}

Our aim is to study how well the solution of the equation
(\ref{eq7.29}) can be approximated by a linear combination 
of Gauss functions in terms of corresponding approximation 
properties of the right hand side $f$. The solution 
possesses the representation
\begin{equation}    \label{eq8.1}  
u\;=\;\sum_{\nu=0}^\infty (-1)^\nu \TS^\nu\!f.
\end{equation}
In the first step of our analysis we study the 
approximability of the terms $\TS^\nu\!f$.

Let $u\in H^1$ be a function that can be well approximated 
by low numbers of Gauss functions in the following sense. 
Assume that there is an infinite sequence $u_1,u_2,\ldots$ 
of Gauss functions such that, for every $\varepsilon>0$,
\begin{equation}    \label{eq8.2}
\Big\|\,u\,-\sum_{j=1}^{n(\varepsilon)}u_j\,\Big\|_1
\leq\; \varepsilon,
\quad
n(\varepsilon)\,\leq\,\Big(\frac{\kappa}{\varepsilon}\Big)^{1/r},
\end{equation}
where $r>0$ is a given approximation order and $\kappa$ a constant 
that depends on $u$ but is independent of $\varepsilon$. If the 
integer $n(\varepsilon)\geq 0$ takes the value zero, the 
approximating sum is empty and the norm of $u$ itself already 
less than or equal to~$\varepsilon$. We can assume that 
$n(\varepsilon)$ increases when $\varepsilon$ decreases; if 
necessary, one replaces $n(\varepsilon)$ simply by the minimum 
of all $n(\varepsilon')$ for $\varepsilon'\leq\varepsilon$. Our 
first objective is to show that, with sufficiently small values 
of the constant $\alpha$ from (\ref{eq7.24}), one can approximate 
the function
\begin{equation}    \label{eq8.3}
\TS u\;=\,\sum_{k=0}^\infty\sum_{\ell=1}^{\ell(k)}T_{k,\ell}\,u
\end{equation}
basically by half the number of Gauss functions with 
double accuracy.

These approximations are constructed as follows. First, 
we split the decay rate (\ref{eq7.25}) in dependence 
of the order $r$ into the product $q=q_1q_2$ of two constants
less than one chosen such that our later estimates become best 
possible. They are given~by   
\begin{equation}    \label{eq8.4}
q_1\,=\;\exp\left(-\,\frac{1}{r+1}\,\frac{\vartheta h}{2}\,\right),
\quad
q_2\,=\;\exp\left(-\,\frac{r}{r+1}\,\frac{\vartheta h}{2}\,\right).
\end{equation}
The approximations of $\TS u$ are then the sums
\begin{equation}    \label{eq8.5}
\sum_{k=0}^\infty\sum_{\ell=1}^{\ell(k)}\sum_{j=1}^{n_k}
T_{k,\ell}\,u_j,
\quad
n_k\,=\,n(\delta^{-1}q_2^{-k}\varepsilon)
\end{equation}
of Gauss functions, where $\delta>0$ is here a new parameter 
that still needs to be fixed. These triple sums are finite 
because $n_k=0$ and the inner sums are thus empty as soon as 
the error bound $\delta^{-1}q_2^{-k}\varepsilon$ becomes 
greater than $\kappa$. The approximation error and the number 
of the remaining nonzero terms are estimated in next lemma.
\begin{lemma}       \label{lm8.1}
Let $\varepsilon>0$ be arbitrary and choose the $n_k$ 
as in \rmref{eq8.5}. Then
\begin{equation}    \label{eq8.6}
\bigg\|\;
\sum_{k=0}^\infty\sum_{\ell=1}^{\ell(k)}T_{k,\ell}\,u
\;-\,
\sum_{k=0}^\infty\sum_{\ell=1}^{\ell(k)}\sum_{j=1}^{n_k}
T_{k,\ell}\,u_j
\;\bigg\|_1
\leq\;
\frac{\alpha}{\delta}\,
\left(\frac{1+q_1}{1-q_1}\right)^2\,\varepsilon,
\end{equation}
where $\alpha$ is the constant from \rmref{eq7.24}. 
Moreover, 
\begin{equation}    \label{eq8.7}
\sum_{k=0}^\infty\sum_{\ell=1}^{\ell(k)}n_k
\;\leq\;
\delta^{1/r}\left(\frac{1+q_1}{1-q_1}\right)^2\,
\Big(\frac{\kappa}{\varepsilon}\Big)^{1/r}
\end{equation}
holds for the number of terms $T_{k,\ell}\,u_j$ of 
which the approximation \rmref{eq8.5} is composed.
\end{lemma}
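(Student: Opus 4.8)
The plan is to estimate the difference in \rmref{eq8.6} term by term, exploiting the linearity of each $T_{k,\ell}$, so that
\[
T_{k,\ell}\,u-\sum_{j=1}^{n_k}T_{k,\ell}\,u_j
\,=\,T_{k,\ell}\Big(u-\sum_{j=1}^{n_k}u_j\Big).
\]
The task thereby reduces to controlling the norm of $T_{k,\ell}$ applied to the residual of the Gauss-function approximation of $u$. For this I would combine two ingredients: the operator norm bound \rmref{eq7.28}, giving $\|T_{k,\ell}\,v\|_1\leq\alpha\,q^{\,k}\|v\|_1$, and the approximation hypothesis \rmref{eq8.2}, which for the particular tolerance $\varepsilon'=\delta^{-1}q_2^{-k}\varepsilon$ underlying the choice $n_k=n(\delta^{-1}q_2^{-k}\varepsilon)$ yields $\|u-\sum_{j=1}^{n_k}u_j\|_1\leq\delta^{-1}q_2^{-k}\varepsilon$. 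Since $n_k$ vanishes once this tolerance exceeds $\kappa$, all sums in sight are in fact finite.

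Putting these together, each pair $(k,\ell)$ contributes at most $\alpha\,q^{\,k}\cdot\delta^{-1}q_2^{-k}\varepsilon$ to the error. Here the splitting $q=q_1q_2$ built into \rmref{eq8.4} is decisive: the factor $q^{\,k}q_2^{-k}$ collapses to $q_1^{\,k}$, so that each pair is bounded by $(\alpha/\delta)\,q_1^{\,k}\varepsilon$. Summing over the $\ell(k)$ choices of $\ell$ and then over $k$, and invoking the summation formula preceding \rmref{eq7.33} with $q$ replaced by $q_1$, namely $\sum_{k\geq 0}\ell(k)\,q_1^{\,k}=\big((1+q_1)/(1-q_1)\big)^2$, produces exactly the bound \rmref{eq8.6}. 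The growth $q_2^{-k}$ of the admissible residual, which lets the approximation of $u$ become coarser as $k$ increases, is thus precisely absorbed by the faster decay of the operator norms, leaving a convergent geometric series.

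For the count \rmref{eq8.7} I would use the quantitative part of \rmref{eq8.2}, namely $n(\varepsilon')\leq(\kappa/\varepsilon')^{1/r}$, with the same tolerance $\varepsilon'=\delta^{-1}q_2^{-k}\varepsilon$, which gives $n_k\leq\delta^{1/r}q_2^{\,k/r}(\kappa/\varepsilon)^{1/r}$. Summing $\ell(k)\,n_k$ over $k$ then calls for $\sum_{k\geq 0}\ell(k)\,q_2^{\,k/r}$, and here the second algebraic feature of the same splitting enters: by \rmref{eq8.4} one has $q_2^{1/r}=q_1$, whence $q_2^{\,k/r}=q_1^{\,k}$ and the identical summation formula applies, yielding \rmref{eq8.7}.

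The argument is therefore essentially bookkeeping once the right quantities are assembled; the only genuinely clever point, and the step I would watch most closely, is the twofold role of the factorization $q=q_1q_2$. A single split must simultaneously render the error series summable, through $q^{\,k}q_2^{-k}=q_1^{\,k}$, and render the count series summable, through $q_2^{1/r}=q_1$, and the exponents chosen in \rmref{eq8.4} are exactly those that reconcile these two competing demands. This is why the two resulting bounds carry the identical geometric factor $\big((1+q_1)/(1-q_1)\big)^2$ and differ only in the trade-off between the powers of $\delta$ and of $\varepsilon$, a trade-off left open for optimization through the free parameter $\delta$.
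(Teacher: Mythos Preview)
Your proposal is correct and follows essentially the same route as the paper's proof: linearity of the $T_{k,\ell}$, the operator bound \rmref{eq7.28} combined with the residual bound from \rmref{eq8.2} at tolerance $\delta^{-1}q_2^{-k}\varepsilon$, the cancellation $q^{\,k}q_2^{-k}=q_1^{\,k}$, and the summation formula $\sum_{k\geq 0}\ell(k)\,q_1^{\,k}=\big((1+q_1)/(1-q_1)\big)^2$; for \rmref{eq8.7} the identity $q_2^{1/r}=q_1$ and the same summation formula. Your commentary on the dual role of the splitting $q=q_1q_2$ is apt and matches exactly why the exponents in \rmref{eq8.4} are chosen as they are.
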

\begin{proof}
The left hand side of (\ref{eq8.6}) can obviously 
be estimated by the double sum
\begin{displaymath}
\sum_{k=0}^\infty\sum_{\ell=1}^{\ell(k)}\,
\Big\|\,
T_{k,\ell}\Big(u\,-\sum_{j=1}^{n_k}u_j\,\Big)
\Big\|_1.
\end{displaymath}
The error estimate (\ref{eq8.6}) thus follows from
(\ref{eq7.28}), the assumption (\ref{eq8.2}), that is,
\begin{displaymath}     
\Big\|\,u\,-\sum_{j=1}^{n_k}u_j\,\Big\|_1
\leq\; \delta^{-1}q_2^{-k}\varepsilon
\end{displaymath}
in the present case, from $\|T_{k,\ell}\|_1\leq\alpha\,q^k$,
$qq_2^{-1}=q_1$, $\ell(k)=\max(1,4k)$, and 
\begin{displaymath}
\sum_{k=0}^\infty \ell(k)\,q_1^{\,k}\,=\,
\left(\frac{1+q_1}{1-q_1}\right)^2.
\end{displaymath}
Because $q_2^{1/r}=q_1$, the estimate (\ref{eq8.7}) 
for the number
\begin{displaymath}
\sum_{k=0}^\infty\sum_{\ell=1}^{\ell(k)} n_k
\;\leq\;
\sum_{k=0}^\infty \ell(k)\,
\kappa^{1/r}(\delta^{-1}q_2^{-k}\varepsilon)^{-1/r}
\end{displaymath}
of terms in the approximation (\ref{eq8.5}) results 
with the same summation formula.
\qed
\end{proof}
The operators $T_{k,\ell}$ map a Gauss function to 
a sum of $M/2$ Gauss functions, where
\begin{equation}    \label{eq8.8}
M\,=\;4\,\left(KN\,+\,\frac{(N-1)N}{2}\,\right)
\end{equation}
is the quadruple of the number of the interaction terms 
between the electrons and the nuclei and the electrons 
among each other and $K$ is the number of the nuclei.
The estimates from Lemma~\ref{lm8.1} suggest therefore 
to choose
\begin{equation}    \label{eq8.9}
\delta\,=\,\frac{1}{M^r}
\left(\frac{1-q_1}{1+q_1}\right)^{2r},
\quad
\alpha\,\leq\,\frac{1}{2\,M^r}
\left(\frac{1-q_1}{1+q_1}\right)^{2r+2}.
\end{equation}
The error estimate (\ref{eq8.6}) then reduces to
\begin{equation}    \label{eq8.10}
\bigg\|\;
\sum_{k=0}^\infty\sum_{\ell=1}^{\ell(k)}T_{k,\ell}\,u
\;-\,
\sum_{k=0}^\infty\sum_{\ell=1}^{\ell(k)}\sum_{j=1}^{n_k}
T_{k,\ell}\,u_j
\;\bigg\|_1
\leq\;\,
\frac{\varepsilon}{2},
\end{equation}
and the number of Gauss functions in the approximation 
(\ref{eq8.5}) is bounded by
\begin{equation}    \label{eq8.11}
\frac{M}{2}\,\sum_{k=0}^\infty\sum_{\ell=1}^{\ell(k)} n_k
\;\leq\;
\frac{1}{2}\,\Big(\frac{\kappa}{\varepsilon}\Big)^{1/r}.
\end{equation}
Thus our goal is reached and we have shown that $\TS u$ 
can be approximated by half number of Gauss functions
with double accuracy, provided the width of the 
smoothing kernel (\ref{eq3.5}) is sufficiently small, 
such that the condition on $\alpha$ from (\ref{eq8.9}) 
holds. This condition limits the size of $\alpha$ the 
more the larger the approximation order $r$ becomes. 
In the limit $r=0$, it turns into
\begin{equation}    \label{eq8.12}
\alpha\,\leq\,\frac{1}{2}\,\left(\frac{1-q}{1+q}\,\right)^2.
\end{equation} 
It implies by (\ref{eq7.33}) therefore the estimate 
$\|\TS\|_1\leq1/2$ for the operator $\TS$ from the 
space $H^1$ into itself and thus ensures the 
convergence of the series (\ref{eq8.1}) in $H^1$. 

We have assumed that the number $n(\varepsilon)$ of terms 
in (\ref{eq8.2}) needed to reach an error of norm 
$\leq\varepsilon$ in the approximation of the given function 
$u$ increases when $\varepsilon$ decreases. A decreasing 
$\varepsilon$ thus means that further terms are added to the 
sum (\ref{eq8.5}). Sorting and numbering the single terms 
correspondingly, one gets therefore a new sequence of Gauss 
functions $w_j$ such that, for all 
$\varepsilon>0$,
\begin{equation}    \label{eq8.13}
\Big\|\,\TS u\,-\sum_{j=1}^{n_1(\varepsilon)}w_j\,\Big\|_1
\leq\; \varepsilon,
\quad
n_1(\varepsilon)\,\leq\,
\frac{1}{2}\,\Big(\frac{\kappa}{2\,\varepsilon}\Big)^{1/r}.
\end{equation}
The old situation is thus restored, but with a new function
$n_1(\varepsilon)$ counting the number of Gauss functions
needed to obtain a given accuracy. As the constant $\kappa$
from (\ref{eq8.2}) does not enter into the bound for 
$\alpha$, this process can be iterated.
\begin{lemma}       \label{lm8.2}
Starting from a function $u\in H^1$ as in \rmref{eq8.2}, 
for every $\nu=1,2,\ldots$ and every $\varepsilon>0$, 
the function $\TS^\nu u$ can be approximated by a linear 
combination of
\begin{equation}    \label{eq8.14}
n_\nu(\varepsilon)\,\leq\,
\frac{1}{2^\nu}\,\Big(\frac{\kappa}{2^\nu\varepsilon}\Big)^{1/r}
\end{equation}
Gauss functions up to an $H^1$-error $\varepsilon$, 
provided $\alpha$ is chosen as in \rmref{eq8.9}.
\end{lemma}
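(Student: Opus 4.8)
The plan is to prove Lemma~\ref{lm8.2} by induction on $\nu$, using Lemma~\ref{lm8.1} together with the choices of $\delta$ and $\alpha$ in \rmref{eq8.9} as the engine that drives each induction step. The base case $\nu=0$ is exactly the hypothesis \rmref{eq8.2} on $u$, with approximation order $r$ and constant $\kappa$. The inductive step is precisely the mechanism already assembled in the discussion surrounding \rmref{eq8.10}--\rmref{eq8.13}: if $\TS^{\nu-1}u$ admits, for every $\varepsilon>0$, an approximation by $n_{\nu-1}(\varepsilon)$ Gauss functions up to $H^1$-error $\varepsilon$, then applying the construction \rmref{eq8.5} to $\TS^{\nu-1}u$ in place of $u$ yields an approximation of $\TS^\nu u=\TS(\TS^{\nu-1}u)$.

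First I would set up the induction hypothesis in the exact form of \rmref{eq8.13}, namely that $\TS^{\nu-1}u$ plays the role of the function to be refined and that its count obeys a bound of the shape $n_{\nu-1}(\varepsilon)\leq 2^{-(\nu-1)}(\kappa/(2^{\nu-1}\varepsilon))^{1/r}$. I would then invoke Lemma~\ref{lm8.1} with $\TS^{\nu-1}u$ as input: the error estimate \rmref{eq8.6}, combined with the choices \rmref{eq8.9} of $\delta$ and $\alpha$ and the count \rmref{eq8.8} of interaction terms $M$, collapses to \rmref{eq8.10}, so that $\TS^\nu u$ is approximated to within $\varepsilon/2$. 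Simultaneously, \rmref{eq8.7} together with the fact that each $T_{k,\ell}$ produces $M/2$ Gauss functions from a single one gives, exactly as in \rmref{eq8.11}, a term count bounded by $\tfrac12(\kappa_{\nu-1}/\varepsilon)^{1/r}$, where $\kappa_{\nu-1}$ is the constant inherited from the previous step. The decisive structural observation, already emphasized after \rmref{eq8.11}, is that the bound \rmref{eq8.9} on $\alpha$ does not involve $\kappa$; hence the same $\alpha$ and $\delta$ work at every level of the iteration, and only the constant in front of $(\,\cdot\,/\varepsilon)^{1/r}$ changes.

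The bookkeeping of how the constant degrades is the heart of the matter. Tracking the factor $1/2$ gained in accuracy at each step, I would show that after $\nu$ iterations the effective constant has been divided by $2^\nu$ both in the prefactor and inside the parenthesis, yielding precisely
\begin{displaymath}
n_\nu(\varepsilon)\,\leq\,
\frac{1}{2^\nu}\,\Big(\frac{\kappa}{2^\nu\varepsilon}\Big)^{1/r},
\end{displaymath}
which is the claim \rmref{eq8.14}. Concretely, replacing $\varepsilon$ by $\varepsilon$ in \rmref{eq8.13} at each stage and relabelling the accumulated Gauss functions $w_j$ as in the paragraph preceding the lemma restores the hypothesis in the same form but with $\kappa$ replaced by $\kappa/2$ and an extra factor $1/2$ in the count, so that $\nu$ applications produce the factors $2^{-\nu}$ in both places.

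The step I expect to demand the most care is verifying that the hypotheses of Lemma~\ref{lm8.1} genuinely transfer from one level to the next without silently changing the approximation order $r$ or the per-step reduction constant. One must check that the refined sequence $w_j$ constructed from \rmref{eq8.5} again satisfies a monotonicity-in-$\varepsilon$ condition of the type imposed on $n(\varepsilon)$ in \rmref{eq8.2}, so that the sorting-and-renumbering argument leading to \rmref{eq8.13} is legitimate at every stage; this is exactly where the remark that $n(\varepsilon)$ may be assumed increasing is used repeatedly. Once this self-similarity of the hypothesis is confirmed, the induction closes cleanly, since $\alpha$ and $\delta$ are fixed once and for all by \rmref{eq8.9} and do not need to be re-adjusted as $\nu$ grows.
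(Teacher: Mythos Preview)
Your proposal is correct and follows essentially the same approach as the paper, which does not give a separate proof of Lemma~\ref{lm8.2} but presents it as the outcome of iterating the construction leading to \rmref{eq8.13}. Your induction formalizes exactly this iteration, and your bookkeeping of the constants is accurate: each step replaces $\kappa$ by $\kappa/2$ and gains an extra factor $1/2$ in the count, and the crucial point that the bound on $\alpha$ in \rmref{eq8.9} is independent of $\kappa$---so that the same $\alpha$ serves at every level---is precisely the observation the paper highlights.
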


Such a small norm of the operator $\TS$ as enforced 
by the condition from (\ref{eq8.9}) to the constant 
$\alpha$ means that only very few terms 
$-\TS\!f,\TS^2\!f,\ldots$ need to be added to the 
right hand side of the equation $u+\TS u=f$ to 
approximate its solution
\begin{equation}    \label{eq8.15}
u\;=\,f\,-\,\TS\!f\,+\,\TS^2\!f\,-\;\ldots
\end{equation}
with high accuracy. The first term $-\TS\!f$ explicitly 
depends on the distances of the electrons and covers 
two-particle interactions, the second then also 
three-particle interactions, and so on. Lemma~\ref{lm8.2} 
means that less and less Gauss functions are necessary 
to approximate these terms sufficiently well and leads 
to our final and concluding
\begin{theorem}     \label{thm8.1}
Let $f$ be a function in $H^1$ and assume that there 
exists an infinite sequence of Gauss functions 
$g_1,g_2,\ldots$ such that, for every $\varepsilon>0$,
\begin{equation}    \label{eq8.16}
\Big\|\,f\,-\sum_{j=1}^{n(\varepsilon)}g_j\,\Big\|_1
\leq\; \varepsilon,
\quad
n(\varepsilon)\,\leq\,\Big(\frac{\kappa}{\varepsilon}\Big)^{1/r},
\end{equation}
where $r$ is a given approximation order and $\kappa$ a 
constant that depends on $f$ but is independent of 
$\varepsilon$. The solution $u\in H^1$ of the equation
\begin{equation}    \label{eq8.17}
u\;+\,\TS u\,=\,f
\end{equation}
can then, for every $\varepsilon>0$, be approximated 
by a linear combination of 
\begin{equation}    \label{eq8.18}
n\,\leq\,2\;\Big(\frac{2\kappa}{\varepsilon}\Big)^{1/r}
\end{equation}
Gauss functions up to an $H^1$-error $\varepsilon$, 
provided the width of the smoothing kernel \rmref{eq3.5} 
is sufficiently small in dependence of the approximation 
order $r$, that is, the constant $\alpha$ from 
\rmref{eq7.24} satisfies the condition from \rmref{eq8.9}.
\end{theorem}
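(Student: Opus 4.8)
The plan is to substitute the Neumann series representation (\ref{eq8.1}) of the solution and to approximate each of its terms $\TS^\nu f$ separately by means of Lemma~\ref{lm8.2}, distributing the prescribed error budget $\varepsilon$ across the infinitely many terms in a geometrically decaying manner. The signs $(-1)^\nu$ play no role for the counting, as they only enter the coefficients of the final linear combination.

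First I would observe that the hypothesis (\ref{eq8.16}) on $f$ is exactly the assumption (\ref{eq8.2}) required by Lemma~\ref{lm8.2}, now with $f$ in the role of the base function $u$, and that the term $\nu=0$ is automatically covered: since $\TS^0 f=f$, the bound (\ref{eq8.14}) reduces to (\ref{eq8.16}) for $\nu=0$. The condition (\ref{eq8.9}) on the constant $\alpha$, which by (\ref{eq7.33}) together with (\ref{eq8.12}) forces $\|\TS\|_1\le 1/2$, guarantees that the series (\ref{eq8.1}) converges in $H^1$. It therefore suffices to approximate each $\TS^\nu f$ to a suitable accuracy $\varepsilon_\nu$ and to add up the resulting approximations, controlling the total error by the triangle inequality applied to (\ref{eq8.1}).

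The key step is the choice of the per-term accuracies. I would set $\varepsilon_\nu=2^{-(\nu+1)}\varepsilon$, so that $\sum_{\nu=0}^\infty\varepsilon_\nu=\varepsilon$ and the total $H^1$-error stays below $\varepsilon$. By Lemma~\ref{lm8.2} each $\TS^\nu f$ is then approximated by at most
\[
n_\nu(\varepsilon_\nu)\,\le\,\frac{1}{2^\nu}\left(\frac{\kappa}{2^\nu\varepsilon_\nu}\right)^{1/r}
=\frac{1}{2^\nu}\left(\frac{2\kappa}{\varepsilon}\right)^{1/r}
\]
Gauss functions, where the decisive cancellation $2^\nu\varepsilon_\nu=\varepsilon/2$ removes all $\nu$-dependence from the bracket and leaves only the geometric prefactor $2^{-\nu}$. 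Summing over $\nu$ and using $\sum_{\nu=0}^\infty 2^{-\nu}=2$ yields precisely the bound (\ref{eq8.18}) on the total number of Gauss functions. It is exactly the matching of the decay rate $2^{-(\nu+1)}$ of the error budget to the $2^{-\nu}$ structure of (\ref{eq8.14}) that makes this work, and this matching is where the little art of the argument lies.

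The only point requiring a bit of care is that the total approximation be a genuine finite linear combination: for every $\nu$ large enough that $\frac{1}{2^\nu}(2\kappa/\varepsilon)^{1/r}<1$, the integer $n_\nu(\varepsilon_\nu)$ vanishes, and by the convention following (\ref{eq8.2}) this simply means $\|\TS^\nu f\|_1\le\varepsilon_\nu$, so these tail terms are omitted without contributing any Gauss functions while still respecting the error estimate. I do not expect a serious obstacle here: the substantive work, namely the term-wise estimate with its simultaneous halving of the count and the accuracy, has already been carried out in Lemma~\ref{lm8.2}, and the final theorem is in essence the bookkeeping that assembles these estimates into a single bound.
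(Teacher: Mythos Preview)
Your proposal is correct and follows essentially the same argument as the paper: expand $u$ via the Neumann series, approximate each $\TS^\nu f$ to accuracy $\varepsilon_\nu=2^{-(\nu+1)}\varepsilon$ using Lemma~\ref{lm8.2}, and sum both the errors and the term counts. Your observation about the cancellation $2^\nu\varepsilon_\nu=\varepsilon/2$ and the finiteness of the resulting sum matches the paper's computation exactly.
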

\begin{proof}
The proof is based on the representation
\begin{displaymath}
(I+\TS)^{-1}f\;=\,\sum_{\nu=0}^\infty (-1)^\nu \TS^\nu\!f
\end{displaymath}
and the approximation of the parts $\TS^\nu\!f$ up to an 
error $\varepsilon_\nu\,=\;2^{-(\nu+1)}\varepsilon$. By 
Lemma~\ref{lm8.2}, the needed number of Gauss functions 
sums then up to a value not larger than 
\begin{displaymath}
\sum_{\nu=0}^\infty\;
\frac{1}{2^\nu}\,\Big(\frac{\kappa}{2^\nu\varepsilon_\nu}\Big)^{1/r}
\!=\;\,2\;\Big(\frac{2\kappa}{\varepsilon}\Big)^{1/r}
\end{displaymath}
and the errors $\varepsilon_\nu$ to the target accuracy 
$\varepsilon$, provided the width of the smoothing kernel 
(\ref{eq3.5}) is so small that the constant $\alpha$ 
satisfies the condition from (\ref{eq8.9}).
\qed
\end{proof}
We remark that the theorem can be generalized from $H^1$ 
to every space $H^{\,s}$ of order $0\leq s\leq 1+\vartheta$,  
under the same condition to the constant $\alpha$. 
The proof starts from a generalization of Lemma~\ref{lm7.7} 
and Lemma~\ref{lm7.8}, considering the $V_k$ as operators 
from $H^{\,s}$ to $H^{\,t}$, $t=s-(1+\vartheta)$, and the 
$G_kP$ correspondingly as operators from $H^{\,t}$ back 
to the space $H^{\,s}$ of interest, and proceeds then 
as before. 

The given approximations of the solution (\ref{eq8.1}) 
can also be seen from a different perspective, 
decomposing them not into single Gauss functions but 
into a much smaller number of generic building blocks. 
Let us call these building blocks elementary functions 
and let us assume that the right hand side $f$ is 
given as a series of elementary functions of level zero, 
say as a series of Gauss functions as considered so far, 
or, for example, a series of Slater determinants of 
three-dimensional Gaussian orbitals. If $g$ is an 
elementary function of level~$\nu$, the application 
of the operators $T_{k,\ell}$ to $g$ yields elementary 
functions of level $\nu+1$. That is, the $\TS^\nu\!f$ 
are composed of elementary functions of level $\nu$, 
and their approximations of finite subsets of these 
functions. The symmetry properties of the elementary 
functions with respect to the exchange of the electron 
positions are inherited from one level to the next. 
Starting from corresponding approximation properties 
of the series representing the right hand side, the 
total number of elementary functions needed to 
approximate the solution up to a given accuracy 
can be estimated in exactly the same way as this has 
been done here for Gauss functions, where the number 
of elementary functions halves again from one level 
to the next. The condition from (\ref{eq8.9}) needs 
only to be replaced by the weaker and less restrictive 
condition
\begin{equation}    \label{eq8.19}
\alpha\,\leq\,
\frac{1}{2^{r+1}}\left(\frac{1-q_1}{1+q_1}\right)^{2r+2}
\end{equation}
into which the system parameters and the eigenvalue
enter only indirectly via the definition 
(\ref{eq7.24}) of the constant $\alpha$. With this 
new condition to $\alpha$, Theorem~\ref{thm8.1} 
literally transfers to the present situation. The 
price to be paid is the more complex, problem-dependent 
structure of the single building blocks. The compact 
representation and compression of these building blocks 
needs further investigation.

In view of our application to the electronic wavefunction $u$ 
under consideration, the solution of the equation (\ref{eq4.7}), 
the condition to the width of the smoothing kernel~$K$ means 
that a sufficiently large part of $u$ needs to be shifted to 
the right hand side \mbox{$f=Qu$} of the equation. The 
$H^1$-distance between the wavefunction $u$ and its smoothed 
variant $Qu=K*u$ is by (\ref{eq4.10}) of order $\gamma^{\,1/2}$. 
The $H^1$-distance of the wavefunction $u$ and the solution 
$\uS$ of the perturbed equation (\ref{eq4.8}):
\begin{equation}    \label{eq8.20}
\uS\,+\TS\uS\,=\,f, \quad f\,=\,Qu,
\end{equation}
behaves by (\ref{eq4.9}), by (\ref{eq6.6}) and (\ref{eq6.11}),
and by (\ref{eq5.10}) and (\ref{eq5.11}) in comparison like
\begin{equation}    \label{eq8.21}
\lesssim\;\gamma^{\,1/2}\,h^{-1/2}\e^{-\pi^2/h}.
\end{equation}
For small $h$, the gap between the basic accuracy of order 
$\gamma^{\,1/2}$ and the attainable accuracy thus rapidly 
widens to many orders of magnitude. The conclusion from 
Theorem~\ref{thm8.1} is therefore that the approximation 
of the quasi-exact solution $\uS$ of the Schr\"odinger 
equation does not require a substantially larger number 
of terms than that of the smoothed variant $Qu$ of the 
true wavefunction. The question remains when this rather
astonishing effect actually sets in and how far the also 
with the best possible choice of the parameter 
$\vartheta<1/2$ still very stringent condition on 
the width of the smoothing kernel can be relaxed.


\section{Epilogue. First steps toward a numerical procedure}
\label{sec9}

\setcounter{equation}{0}
\setcounter{lemma}{0}
\setcounter{theorem}{0}

Many difficulties still have to be overcome on the 
way to a numerical method that fully exploits the 
approximation properties of the given class of Gauss 
and Gauss-Hermite functions and that enables to compute 
such approximations efficiently. This begins with the 
mentioned problem how to incorporate the symmetry 
properties enforced by the Pauli principle and how 
to store such antisymmetrized functions in compact 
form; there is no such thing as Slater determinants. 
However, there are some basic components that will 
presumably be part of such methods. One is approximate 
inverse iteration, a procedure that evolved in recent 
years into a very popular method for the solution of 
the large matrix eigenvalue problems that arise from 
the discretization of linear selfadjoint elliptic 
partial differential equations. The analysis of such 
methods essentially started with the work of D'yakonov 
and Orekhov~\cite{Dyakonov-Orekhov}. In a series of 
groundbreaking papers, Knyazev and Neymeyr analyzed 
these methods in great detail; we refer to 
\cite{Knyazev-Neymeyr} and the literature cited 
therein. 

Approximate inverse iteration can be directly applied 
to operator equations in infinite dimensional spaces
\cite{Rohwedder-Schneider-Zeiser} and can best be 
understood in terms of the weak formulation of the
eigenvalue problems. Let $\Hs$ be a Hilbert space that 
is equipped with the inner product $a(u,v)$ inducing 
the energy norm $\|u\|$, under which it is complete, 
and a further inner product $(u,v)$. Let the infimum 
of the Rayleigh quotient 
\begin{equation}    \label{eq9.1}
\lambda(u)=\frac{a(u,u)}{(u,u)}, \quad 
\text{$u\neq 0$ in $\Hs$},
\end{equation}
be an isolated eigenvalue $\lambda_1>0$ of finite 
multiplicity and let $\E_1$ be the assigned 
eigenspace, the finite dimensional space of all 
$u\in\Hs$ for which 
\begin{equation}    \label{eq9.2}
a(u,v)=\lambda_1(u,v), \quad v\in\Hs,
\end{equation}
or equivalently $\lambda(u)=\lambda_1$ holds. 
The aim is the calculation of this eigenvalue, 
that is, the ground state energy of the system 
under consideration.

Let $\lambda_2>\lambda_1$ be the infimum of the
Rayleigh quotient on the with respect to both 
inner products orthogonal complement of the 
eigenspace $\E_1$ for the eigenvalue $\lambda_1$.
In cases like ours, $\lambda_2$ is also an 
isolated eigenvalue, but this is not truly needed.
Given an element $u\in\Hs$~with norm $\|u\|_0=1$ 
and Rayleigh quotient $\lambda(u)<\lambda_2$, 
in inverse iteration in its original, exact 
version at first the solution $w\in\Hs$ of the 
equation
\begin{equation}    \label{eq9.3}
a(w,v)=\,a(u,v)-\lambda(u)(u,v), \quad v\in\Hs,
\end{equation}
is determined, which exists by the Riesz 
representation or the Lax-Milgram theorem and 
is unique. The current $u$ is then replaced 
by $u-w$. Since $a(u,w)=0$, the new element 
$u-w$ is different from zero so that 
$\lambda(u-w)$ is well defined and the process 
can be repeated with the normed version of $u-w$. 
The so iteratively generated sequence of 
Rayleigh quotients decreases then monotonously to 
the eigenvalue $\lambda_1$ and the iterates~$u$ 
converge to an eigenvector or eigenfunction for 
this eigenvalue.

In the approximate version of the method, the 
solution $w$ of equation (\ref{eq9.3}) is replaced 
by an approximation $\wS\in\Hs$ for which an error 
estimate
\begin{equation}    \label{eq9.4}
\|\wS-w\|\leq\delta\|w\|
\end{equation}
holds, where $\delta<1$ is a fixed constant that 
controls the accuracy. Then $u-\wS\neq 0$, so 
that the process can proceed with the normed 
version $u'$ of $u-\wS$ as new iterate. No 
assumption on the origin of $\wS$ is needed. 
It can, for example, be the element of best 
approximation of $w$ in a finite dimensional 
subspace of $\Hs$, an iteratively calculated 
approximation of this element, or anything 
else wherever it comes from. 

A simple, albeit surely not optimal analysis 
of this variant along the lines given in the 
original paper of  D'yakonov and Orekhov can 
be found in \cite{Ys_2016}. Main result is that 
under the given assumptions, and if in particular 
already $\lambda(u)<\lambda_2$, the estimate
\begin{equation}    \label{eq9.5}
\lambda(u')-\lambda_1\,\leq\,q(\lambda(u))(\lambda(u)-\lambda_1)
\end{equation}
holds, where $q(\lambda)$ is the on the interval 
$\lambda_1\leq\lambda\leq\lambda_2$ strictly
increasing function
\begin{equation}    \label{eq9.6}
q(\lambda)=\,1-\,
\frac{(1-\delta^2)\,\lambda\,(\lambda_2-\lambda)^2}
{\lambda_2^2\,\lambda+(1-\delta^2)(\lambda_2-\lambda)^2(\lambda-\lambda_1)}.
\end{equation}
If one starts therefore with a normed $u=u_0$ in $\Hs$
with Rayleigh quotient $\lambda(u_0)<\lambda_2$ and
generates as described a sequence of normed $u_k$, the
Rayleigh quotients $\lambda(u_k)$ decrease strictly
to the minimum eigenvalue $\lambda_1$ or become 
stationary there. Moreover, one can show that the 
iterates $u_k$ converge to an eigenvector for the 
eigenvalue $\lambda_1$. 

To apply this form of inverse iteration to our 
eigenvalue problem (\ref{eq2.6}), we have first
to shift the Hamiltonian and to replace the
bilinear form (\ref{eq2.5}) by a bilinear form
\begin{equation}    \label{eq9.7}
a(u,v)=\int\big\{\nabla u\cdot\nabla v+Vuv+\mu uv\big\}\dx
\end{equation}
on $\Hs=H^1$, with $\mu>0$ a still to be determined 
constant value. The original eigenvalues $\lambda$ 
turn then into $\lambda+\mu$; the eigenfunctions 
themselves remain untouched. The term $a(u,u)$ can, 
according to Lemma~\ref{lm2.1}, be estimated from 
above and below by
\begin{equation}    \label{eq9.8}
\|\nabla u\|_0^2\,\pm\,\theta\|\nabla u\|_0\|u\|_0+\mu\|u\|_0^2,
\end{equation}
where $\|u\|_0$ is the $L_2$-norm and $\theta$
the norm (\ref{eq3.11}) of the potential $V$, 
understood as operator from $H^1$ to $L_2$. 
Making the expressions (\ref{eq9.8}) extremal 
under the constraint $\|\nabla u\|_0^2+\mu\|u\|_0^2=1$, 
one can therefore estimate $a(u,u)$ from below and 
above by
\begin{equation}    \label{eq9.9}
\left(1-\frac{\theta}{2\sqrt{\mu}}\right) b(u,u)
\,\leq\, a(u,u) \,\leq\,
\left(1+\frac{\theta}{2\sqrt{\mu}}\right) b(u,u)
\end{equation}
in terms of the inner product
\begin{equation}    \label{eq9.10}
b(u,v)=\int\big\{\nabla u\cdot\nabla v+\mu uv\big\}\dx
\end{equation}
on $H^1$, now without the potential part. The shifted 
bilinear form (\ref{eq9.7}) fits therefore for values 
$\mu>\theta^2/4$ into the described framework of 
approximate inverse iteration.

An obvious choice for the approximation of the 
solution $w\in H^1$ of equation (\ref{eq9.3}) 
is in the given context the solution 
$\wS\in H^1$ of the equation
\begin{equation}    \label{eq9.11}
b(\wS,v)=\,a(u,v)-\lambda(u)(u,v), \quad v\in H^1.
\end{equation}
By the definition of $w$ and $\wS$ and again 
by Lemma~\ref{lm2.1}, then the estimate
\begin{equation}    \label{eq9.12}
b(\wS-w,\wS-w)=(Vw,\wS-w)\leq\theta\|\nabla w\|_0\|\wS-w\|_0
\end{equation}
holds. With the help of (\ref{eq9.9}), finally 
the energy norm estimate
\begin{equation}    \label{eq9.13}
\|\wS-w\|\leq \sqrt{c(\eta)}\,\eta\,\|w\|
\end{equation}
in terms of $\eta=\theta/\sqrt{\mu}$ and the function 
$c(\eta)=(2+\eta)/(2-\eta)$ follows. That is, for  
sufficiently large constants $\mu$ the basic condition 
(\ref{eq9.4}) is fulfilled.

Now we have reached the point at which the Gauss 
functions come into play. With $u$ given, the 
new approximation reads, before normalization, 
in operator form
\begin{equation}    \label{eq9.14}
u-\wS\,=\,
u\,-\,(-\Delta+\mu)^{-1}(-\Delta u+\mu u+Vu-\lambda(u)u\,).
\end{equation}
The idea is to approximate the operators
$(-\Delta+\mu)^{-1}$ and $V$ in the discussed 
manner or similarly by means of Gauss functions. 
Gauss functions are then again mapped to series 
of Gauss functions, which have to be truncated 
appropriately. The residual 
\begin{equation}    \label{eq9.15}
-\Delta u+\mu u+Vu-\lambda(u)u
\end{equation}
actually does not depend on the choice of the 
constant $\mu$, which enters into the process 
only via the inverse of the shifted Laplace 
operator. It needs to be approximated with high 
accuracy to fulfill the condition (\ref{eq9.4}) 
on the accuracy of the approximation~$\wS$. 
The requirements for the approximation of the 
inverse of the shifted Laplace operator are 
in comparison modest. The price to be paid is 
that the degree of the polynomial part of the 
Gaussians increases due the Laplace part in the 
residual from one step to the next. Starting 
from the representation
\begin{equation}    \label{eq9.16}
u-\wS\,=\,(-\Delta+\mu)^{-1}(\lambda(u)u-Vu)
\end{equation}
of the new iterate before normalization this can 
be avoided, but conversely then also the inverse 
of the shifted Laplacian has to be approximated 
with high accuracy. The big unsolved question 
with both variants is how to truncate the 
intermediate series of Gauss functions to keep 
the number of terms on a computationally still 
feasible level, without sacrificing the accuracy 
or lowering it too much.


\section*{Appendix. Remarks on interpolation}

In Sect.~\ref{sec7}, more precisely in the proof of 
Lemma~\ref{lm7.7} and \ref{lm7.8}, we have used some 
results from interpolation theory. The interpolation
between Banach and Hilbert spaces is a large and 
well established field of functional analysis and 
the theory of function spaces. A standard reference 
is \cite{Bergh-Loefstroem}. A more condensed 
presentation coming closer to our needs can be found 
in \cite{McLean}. We are in the lucky situation that 
the proof of the mentioned two lemmata requires only 
the interpolation within the space of rapidly 
decreasing functions, a fact that simplifies the 
argumentation a lot and enables us to derive the 
necessary results in a few lines.

The key is the representation of the norms on the 
space $\S$ of the real-valued, rapidly decreasing 
functions in terms of the $K$-functionals 
\begin{equation*}  
K(t,u,\vartheta_1,\vartheta_2)\,=\;\inf_{v\in\S}
\big\{\,\|u-v\|_{\vartheta_1}^2+\;t^2\|v\|_{\vartheta_2}^2\big\}^{1/2},
\quad \vartheta_1<\vartheta_2,
\end{equation*}
which can be considered as more refined 
smoothness measures.
\begin{unnumberedlemma}
Let $\vartheta_1<\vartheta_2$, $0<s<1$, and
$\vartheta=\vartheta_1+s\,(\vartheta_2-\vartheta_1)$.
For all $u\in\S$ then
\begin{equation*}    
\int_0^\infty [\;t^{-s}K(t,u,\vartheta_1,\vartheta_2)\,]^2\,\frac{\dt}{t}
\;=\,\int_0^\infty\frac{\,t^{1-2s}}{1+\,t^2}\,\dt\;\|u\|_\vartheta^{\,2}.
\end{equation*}
\end{unnumberedlemma}
\begin{proof}
The expression whose infimum is sought reads in 
Fourier representation
\begin{displaymath}
\int\Big\{\big(1+|\vomega|^2\big){}^{\vartheta_1}
|\fourier{u}(\vomega)-\fourier{v}(\vomega)|^2\,+\;
t^2\big(1+|\vomega|^2\big){}^{\vartheta_2}
|\fourier{v}(\vomega)|^2\,\Big\}\domega.
\end{displaymath}
The integrand is, with given $\fourier{u}(\vomega)$, 
pointwise minimized by the value
\begin{displaymath}
\fourier{v}(\vomega)\;=\;
\frac{\fourier{u}(\vomega)}
{1+\,t^2\big(1+|\vomega|^2\big){}^{\vartheta_2-\vartheta_1}}.
\end{displaymath}
This expression defines another real-valued, rapidly 
decreasing function $v$ at which the infimum is 
attained. Inserting this function above, we get a 
closed representation of the $K$-functional 
$K(t,u,\vartheta_1,\vartheta_2)$ of $u$ in terms 
of the Fourier transform of $u$:
\begin{displaymath}
K(t,u,\vartheta_1,\vartheta_2)^2\,=\,
\int\frac{t^2\big(1+|\vomega|^2\big){}^{\vartheta_2-\vartheta_1}}
{1+\,t^2\big(1+|\vomega|^2\big){}^{\vartheta_2-\vartheta_1}}\;
\big(1+|\vomega|^2\big){}^{\vartheta_1}
|\fourier{u}(\vomega)|^2\domega.
\end{displaymath}
The proposition follows from this representation 
with Fubini's theorem.
\qed
\end{proof}
Let us assume now that we have a linear operator 
$T:\S\to\S$ and that for all $u\in\S$
\begin{equation*}
\|Tu\|_{\vartheta_1'}\leq c\,\|u\|_{\vartheta_1},
\quad
\|Tu\|_{\vartheta_2'}\leq c\,\|u\|_{\vartheta_2},
\end{equation*}
where $\vartheta_1<\vartheta_2$ and $\vartheta_1'<\vartheta_2'$
are two arbitrarily given pairs of real numbers.
\begin{unnumberedlemma}
Let $0\leq s\leq 1$, 
$\vartheta=\vartheta_1+s\,(\vartheta_2-\vartheta_1)$, and
$\vartheta'=\vartheta_1'+s\,(\vartheta_2'-\vartheta_1')$.
For all real-valued, rapidly decreasing functions $u$ 
then, with the same constant, 
\begin{equation*}
\|Tu\|_{\vartheta'}\leq c\,\|u\|_{\vartheta}.
\end{equation*}
\end{unnumberedlemma}
\begin{proof}
For $s=0$ and $s=1$, the proposition already holds
by assumption. For the remaining values in between  
it follows immediately from the estimate 
\begin{displaymath}
K(t,Tu,\vartheta_1',\vartheta_2')\,\leq\,
c\,K(t,u,\vartheta_1,\vartheta_2)
\end{displaymath}
for the $K$-functionals of $u$ and $Tu$ assigned to the 
given norms and the representation above of the two 
intermediate norms in terms of these $K$-functionals.
\qed
\end{proof}


\bibliographystyle{spmpsci}
\bibliography{paper}


\end{document}